\documentclass[a4paper,10pt,notitlepage,reqno]{article}
\usepackage[english]{babel}
\usepackage{amssymb,amsthm,amsmath} 
\usepackage{mathtools}
\usepackage{mathrsfs}
\usepackage{enumerate}
\usepackage{authblk}
\usepackage{cite}
\usepackage[symbol]{footmisc}
\usepackage{color}
\usepackage{geometry}
\geometry{top=2.5cm,bottom=3.5cm,left=2cm,right=2cm,heightrounded}
\usepackage[colorlinks=true, linkcolor=red, citecolor=blue, urlcolor=blue,hyperfootnotes=false]{hyperref}
\usepackage{cleveref}
\linespread{1.1}

\theoremstyle{plain} 
\newtheorem{theorem}{Theorem}[section]
\newtheorem{lemma}{Lemma}[section]

\newtheorem{corollary}{Corollary}[section]

\theoremstyle{definition}
\newtheorem{definition}{Definition}[section]

\theoremstyle{remark}
\newtheorem{remark}{Remark}[section]

\DeclareMathOperator{\curl}{curl}

\DeclareMathOperator{\dist}{dist}

\DeclareMathOperator{\ess}{ess}
\DeclareMathOperator{\Dom}{dom}

\newcommand{\R}{\mathbb{R}}
\newcommand{\Z}{\mathbb{Z}}
\newcommand{\C}{\mathbb{C}}

\newcommand{\N}{\mathbb{N}}
\newcommand{\normeq}[1]{{\left\vert\kern-0.25ex\left\vert\kern-0.25ex\left\vert #1 
    \right\vert\kern-0.25ex\right\vert\kern-0.25ex\right\vert}}

\newenvironment{system}%
{\left\lbrace\begin{array}{r@{\hspace{1mm}}ll}}%
{\end{array}\right.}

%------------Cover page----------%

\title{\textbf{Improved Hardy-Rellich inequalities}}

\author[1]{Biagio Cassano}
\author[2]{Lucrezia Cossetti} 
\author[3]{Luca Fanelli}		
	
\affil[1]{Dipartimento di Matematica, Universit\`a degli
  Studi di Bari ``A.~Moro'', via Orabona 4, 70125 Bari, Italy.

  Dipartimento di Matematica e Fisica, Università degli Studi
  della Campania  ``Luigi Vanvitelli'', Viale Lincoln 5, 81100 Caserta, Italy; biagio.cassano@unicampania.it}

\affil[2]{Fakult\"{a}t f\"{u}r Mathematik, Institut f\"{u}r Analysis, Karlsruher Institut f\"{u}r Technologie (KIT), Englerstra{\ss}e 2, 76131 Karlsruhe, Germany; lucrezia.cossetti@kit.edu}

\affil[3]{Departamento de Matematicas, Universidad del Pa\'is Vasco/Euskal Herriko Unibertsitatea (UPV/EHU), Aptdo. 644, 48080, Bilbao, Spain; luca.fanelli@ehu.es}

%\subjclass[2010]{35J47, 47A10}

%\keywords{}

\begin{document}

\date{\small 18 November 2021}
%%%%%%%%%%%%%%%%%%%%%%%%%%%%%%%%%%%%%%%%%%%%%%%%%%%%%%%%%%%%%%%%%%%%%%%%%%%%%%%%%%%%%%%%%%%%%%%%%%%%%%%%%%%%%%%%%%%%%%%%%%%%%%%%%%%%%%%%%%

\maketitle
%\vspace{-1cm}

%\nocite{*}

%------------abstract------------%

\begin{abstract}
	\noindent
We investigate Hardy-Rellich inequalities for perturbed
Laplacians. In particular, we show that a non-trivial angular
perturbation of the free operator typically improves the inequality,
and may also provide an estimate which does not hold in the free
case. The main examples are related to the introduction of a magnetic
field: this is a manifestation of the diamagnetic phenomenon, which
has been observed by Laptev and Weidl in \cite{LW1999} for the Hardy
inequality, later by Evans and Lewis in \cite{EL2005} for the Rellich
inequality; however, to the best of our knowledge, the so called Hardy-Rellich inequality has not yet 
been investigated in this regards. After showing the optimal inequality, we prove that the best constant is not attained by any function in the domain of the estimate.
%The proof of our main result is inspired by \cite{Cazacu2019}.
\end{abstract}

\footnotetext{\emph{2020 Mathematics Subject Classification}. 
35A23, 26D10, 83C50}

\footnotetext{\emph{Keywords}. Laplace operator, Biharmonic operator, Hardy-Rellich inequalities, magnetic potentials}

%---------------------------------%
\section{Introduction}
The Hardy inequality, introduced in \cite{Hardy20}, is one of the well known mathematical manifestations of the Uncertainty Principle in Quantum Mechanics. It affirms that
\begin{equation}\label{eq:Hardy-classical}
\int_{\R^d} |\nabla\psi(x)|^2\,dx
\geq \frac{(d-2)^2}{4} 
\int_{\R^d}\frac{|\psi(x)|^2}{|x|^2}\, dx,
\end{equation}
for any $\psi\in H^1(\R^d)$\footnote{$H^1(\R^d)$ denotes the Sobolev space of $L^2(\R^d)$ functions with first weak derivatives in $L^2(\R^d).$}, with $d\geq3.$ 
The low dimensions $d=1,2$ are not included in \eqref{eq:Hardy-classical}, 
since the weight $|x|^{-2}$ is not locally integrable. On the other hand, if $d=1$, the inequality \eqref{eq:Hardy-classical} holds for any function $\psi$ in the smaller domain $H^1(\R\setminus\{0\}):=\overline{C^\infty_0(\R\setminus\{0\})}^{\|\cdot\|_{H^1(\R)}}\subsetneq H^1(\R)$. The dimension $d=2$ is critical for the validity of \eqref{eq:Hardy-classical}, which cannot hold in this case with a non-zero constant on the right-hand side.

\medskip
\noindent
An analogous and more singular example is given by the Rellich inequality, introduced in \cite{Rellich53} (see also~\cite{RB1969}), which states that
\begin{equation}\label{eq:Rellich-classical}
	\int_{\R^d} |\Delta \psi(x)|^2\, dx
	\geq \frac{d^2 (d-4)^2}{16} 
	\int_{\R^d} \frac{|\psi(x)|^2}{|x|^4}\,dx,
\end{equation}
for any $\psi\in H^2(\R^d)$, with $d\geq5$, or $\psi\in H^2(\R^d\setminus\{0\})$, with $d=1,3$. The dimensions $d=2,4$ play for \eqref{eq:Rellich-classical} the same role of criticality as $d=2$ for the Hardy inequality. Both the constants on the right-hand sides of \eqref{eq:Hardy-classical}, \eqref{eq:Rellich-classical} are sharp, and not attained on any function in the corresponding domains.

\medskip
\noindent
The inequalities \eqref{eq:Hardy-classical}, \eqref{eq:Rellich-classical} are fundamental tools in order to describe scaling-critical perturbations of the free Hamiltonians $-\Delta$ and $\Delta^2$, respectively, by the standard perturbation theory for quadratic forms. In addition, they naturally come into play in a multitude of areas of Mathematics and Physics (elliptic PDEs with singular potentials, stability of quantum systems etc...). Due to their applications, these inequalities have both been objects of intense study (see~\cite{CK2016,BLS2004,FKLV,FS2008,KPP2018,Ozawa
%,CFKP2021
}  and~\cite{DH1998,Bennett1989,Yafaev1999}, respectively, and references therein, to cite a necessarily incomplete bibliographical list).

\medskip
\noindent
In this paper we are interested in the Hardy-Rellich inequality, which is in between \eqref{eq:Hardy-classical} and \eqref{eq:Rellich-classical}: 
\begin{equation}\label{eq:HardyRellich-classical}
	\int_{\R^d} |\Delta \psi(x)|^2\, dx
	\geq C(d)\int_{\R^d}\frac{|\nabla \psi(x)|^2}{|x|^2}\, dx, 
\end{equation}
for any $\psi\in H^2(\R^d)$ with $d\geq3,$ or $\psi\in H^2(\R\setminus\{0\})$ in the case $d=1$, where the constant $C(d)$ is given by
\begin{equation}\label{eq:cididdi}
C(d) =
\begin{cases}
\frac 14 & \text{if }d=1
\\
\frac{25}{36} & \text{if }d=3
\\
3 & \text{if }d=4
\\
\frac{d^2}{4} & \text{if }d\geq5.
\end{cases}
\end{equation}
The dimension $d=2$ is critical for the validity of \eqref{eq:HardyRellich-classical}, in the same way as for the previous inequalities.
\noindent
Similarly to \eqref{eq:Hardy-classical} and \eqref{eq:Rellich-classical}, inequality \eqref{eq:HardyRellich-classical} is useful to show the boundedness from below of either the biharmonic operator with second order perturbations (in the form sense) or the Laplacian with first order perturbations, via the Kato-Rellich Theorem (in the operator sense). Moreover, due to the trivial identity $\int_{\R^d}|\widehat{\psi}(\xi)|^2|\xi|^\beta\, d\xi=(2\pi)^{-\beta}\int_{\R^d}|(-\Delta)^{\beta/4}\psi(x)|^2\, dx,$ \eqref{eq:HardyRellich-classical} can be recast in the framework of Pitt's inequalities with gradient terms (see~\cite[Theorem 4]{Beckner2008}) which, in their classical formulation, are weighted inequalities involving a function and its Fourier transform and therefore intimately connected to quantifying uncertainty principles. Finally, \eqref{eq:HardyRellich-classical} serves as a tool to get improvement over more standard Rellich-type inequalities on bounded domains (see \cite{TZ2007}).

\medskip
\noindent
Surprisingly, despite being intimately linked to~\eqref{eq:Hardy-classical} and~\eqref{eq:Rellich-classical}, inequality~\eqref{eq:HardyRellich-classical}   
appeared for the first time much later than the former. 
In 2007 Tertikas and Zographopoulos~\cite{TZ2007} proved~\eqref{eq:HardyRellich-classical} for $d\geq 5$.
%Their method uses a spherical harmonics decomposition but their proof fails in lower dimensions. 
The lower dimensional cases $d=3,4$ were covered later independently by Beckner in~\cite{Beckner2008} 
%via the Fourier Transform
%have been used to prove very general Pitt's inequalities involving also gradient type terms 
and by Ghoussoub and Moradifam in~\cite{GM2011}.
%reducing the problem to constructing Bessel pairs of weights. 
%where introducing a characterization of such inequalities in terms of Bessel pairs of weights, were able to develop a powerful unified machinery which allows them to get improvements and generalizations of many Hardy/Rellich-type inequalities. 
Furthermore both these works recovered the higher dimensional case $d\geq 5$ already proved in~\cite{TZ2007}.
The method used in~\cite{GM2011} is reminiscent of the one used
in~\cite{TZ2007} for $d\geq 5$ and it is based on spherical harmonics
decomposition; however, the proof requires distinguishing between the lower and the higher dimensional setting. A compact and unified proof of~\eqref{eq:HardyRellich-classical} in \emph{any} dimension $d\geq 3,$ with optimal constants $C(3)=25/36,$ $C(4)=3$ and $C(d)=d^2/4$ if $d\geq 5,$ was recently obtained by Cazacu in~\cite{Cazacu2019}. He showed that the same technique applied in~\cite{TZ2007} to prove~\eqref{eq:HardyRellich-classical} for $d\geq 5$ could be extended (introducing an additional optimizing parameter) to cover any dimension $d\geq 3.$ In addition, the author showed the non-attainability of the best constant $C(d)$ for any $d\geq 3$ and he also provided minimizing sequences for $C(d)$ in lower dimensions $d=3,4$ (minimizing sequences in $d\geq 5$ were already constructed in~\cite{TZ2007}). Improvements of these inequalities on bounded domains can be found in~\cite{GM2011,Lam2018,NLN2019}.
Hardy-Rellich inequalities valid on Riemaniann manifolds are investigated in~\cite{KO2009,Nguyen2020}. Further generalizations can be found in~\cite{GL2018,Costa2009}.
To the best of our knowledge, the case $d=1$ is not written, anyway this is an immediate consequence of the classical 1D Hardy inequality. More precisely,~\eqref{eq:HardyRellich-classical} holds true in $d=1$ with $C(1)=1/4.$

\medskip
\noindent
The best constants of the above inequalities need to be understood as ground energy levels of suitable Hamiltonians. 
It is convenient to get a deeper insight to \eqref{eq:Hardy-classical} first, introducing the spherical coordinates in $\R^d$, $d\geq2$ to write the free Hamiltonian as
$$
-\Delta = -\frac{\partial^2}{\partial r^2}-\frac{d-1}{r}\frac{\partial}{\partial r}-\frac1{r^2}\Delta_{\mathbb S^{d-1}},
$$
where $-\Delta_{\mathbb S^{d-1}}$ is the Laplace-Beltrami operator on the unit sphere.  The spectrum of $-\Delta_{\mathbb S^{d-1}}$ is purely discrete, and it is given by the sequence $\sigma(-\Delta_{\mathbb S^{d-1}})=\{k(k+d-2)\}_{k =0,1,\dots}$. Then, if we rewrite \eqref{eq:Hardy-classical} using the language of quadratic forms, i.e.
\begin{equation}\label{eq:hardy2quadratic}
-\Delta \geq \frac{(d-2)^2}{4|x|^2},
\end{equation}
the fact that the lowest eigenvalue of $-\Delta_{\mathbb S^{d-1}}$ is 0 shows that the contribution to \eqref{eq:hardy2quadratic} entirely comes from the positive radial operator $L_r=-\frac{\partial^2}{\partial r^2}-\frac{d-1}{r}\frac{\partial}{\partial r}$. Therefore the following two facts are evident:
\begin{itemize}
\item[(i)] if one restricts to $L^2$--functions which are orthogonal to the eigenspace associated to the first eigenvalue of $-\Delta_{\mathbb S^{d-1}}$, then there is an improvement of the best constant in \eqref{eq:hardy2quadratic};
\item[(ii)] any angular perturbation to the operator $-\Delta_{\mathbb S^{d-1}}$ which increases the bottom of the spectrum gives a consequent improvement to the best constant in \eqref{eq:hardy2quadratic}.
\end{itemize}
A trivial example concerning fact (ii) above is obtained by fixing $a>0$ and considering the scaling invariant operator
$$
-\Delta + \frac{a}{|x|^2}= -\frac{\partial^2}{\partial
  r^2}-\frac{d-1}{r}\frac{\partial}{\partial r} + \frac1{r^2}\left(-\Delta_{\mathbb S^{d-1}}+a\right).
$$
Since $\sigma(-\Delta_{\mathbb S^{d-1}}+a)=\{k(k+d-2)+a\}_{k =0,1,\dots}$, we have the obvious inequality
\begin{equation}\label{eq:hardy3quadratic}
-\Delta+ \frac{a}{|x|^2} \geq \left(\frac{(d-2)^2}{4}+a\right)\frac1{|x|^2}.
\end{equation}
A completely analogous more general result can be easily obtained if $a$ is replaced by a $0$--degree homogeneous function $a(\theta):\mathbb S^{d-1}\to\R$, assuming that $\inf_{\mathbb S^{d-1}}a(\theta)=: a>0$.

\medskip
\noindent
A more geometric improvement occurs in presence of an external magnetic field. A magnetic Schr\"odinger Hamiltonian is an operator of the form $-\Delta_A = (-i\nabla + A)^2$, where $A:\R^d\to\R^d$, $d\geq2$.  The diamagnetic inequality 
$$
\left|(-i\nabla+A)\psi(x)\right|\geq|\nabla|\psi|(x)|,
\qquad
\text{for a.e. }x\in\R^d
$$
valid for $A\in L^2_{\text{loc}}$ (see e.g. \cite{Li_Lo}), together with \eqref{eq:hardy2quadratic}, immediately shows that
\begin{equation}\label{eq:hardy4quadratic}
-\Delta_A \geq \frac{(d-2)^2}{4|x|^2},
\end{equation}
for any vector potential $A\in L^2_{\text{loc}}(\R^d)$. In order to understand the role of $A$ in \eqref{eq:hardy4quadratic}, it is again convenient to describe a scaling invariant example. Let $A\in L^2_{\text{loc}}(\R^d)$ be of the form
\begin{equation}\label{eq:magnex}
A(x)=|x|^{-1}\mathbf{A}(\theta),
\qquad
\theta:=\frac{x}{|x|}
\end{equation} 
for some $0$--degree homogeneous vector field $\mathbf{A}:\mathbb S^{d-1}\to\mathbb S^{d-1}$. In addition, assume that $A$ is in the {\it transversal gauge} (or Cr\"onstrom, or Poincar\'e gauge, see \cite{I}), namely $x\cdot A(x)\equiv0$ for almost every $x\in\R^d$. Then the operator $-\Delta_A$ in spherical coordinates reads as
$$
-\Delta_A =-\frac{\partial^2}{\partial r^2}-\frac{d-1}{r}\frac{\partial}{\partial r}-\frac1{r^2}\Delta_{\mathbf A,\mathbb S^{d-1}},
$$
where $-\Delta_{\mathbf A,\mathbb S^{d-1}}=(-i\nabla_{\mathbb S^{d-1}}+\mathbf{A})^2$. As in the previous examples, the main contribution to the improvement in \eqref{eq:hardy4quadratic} comes from the fact that the bottom of the spectrum of $-\Delta_{\mathbf A,\mathbb S^{d-1}}$ is always non-negative, due to the spherical version of the diamagnetic inequality (see e.g. \cite{FFT2011}). Therefore it is natural to look for explicit examples of potentials such that $\min\sigma(-\Delta_{\mathbf A,\mathbb S^{d-1}})=a>0$, with a consequent quantitative improvement in \eqref{eq:hardy4quadratic}. The first example in this direction, at our knowledge, is due to Laptev and Weidl \cite{LW1999}. They proved in the two-dimensional case $d=2$ that
\begin{equation}\label{eq:LW}
	\int_{\R^2} |\nabla_{\!A} \psi(x)|^2\, dx\geq \dist\{\widetilde \Psi,\Z\}^2 \int_{\R^2} \frac{|\psi(x)|^2}{|x|^2}\, dx,
\end{equation}
for any $\psi\in H^1_A:=\left\{f\in L^2(\R^2):\int_{\R^2}|\nabla_A f|^2<\infty\right\}$, where $A$ is the {\it Aharonov-Bohm} vector potential
\begin{equation}\label{eq:AB}
	A(x,y)=\widetilde{\Psi} \left(\frac{-y}{x^2+y^2}, \frac{x}{x^2+y^2} \right), 
	\qquad \widetilde \Psi\in \R,
\end{equation}
and we denote by $\nabla_A:=\nabla-iA$ the magnetic gradient.
In particular, if $\widetilde\Psi\notin\Z$, then \eqref{eq:LW} gives a non-trivial 2D-Hardy inequality. Notice that the potential $A$ in \eqref{eq:AB} is very singular, since $A\notin L^2_{\text{loc}}(\R^2)$. Some examples in higher dimensions have been  recently introduced in \cite{FKLV}.

\medskip
\noindent
As for the Rellich inequality~\eqref{eq:Rellich-classical}, similar arguments lead to the statement of facts (i) and (ii) above. 
About (i), it is known that there are two cases of special interest. First, when $d=2$ inequality~\eqref{eq:Rellich-classical} still holds but only for functions $\psi\in C^\infty_0(\R^2\setminus\{0\})$ which satisfy the following orthogonality condition
\begin{equation}\label{eq:orth-cond}
	f_1(r):=\int_0^{2\pi} \psi(r,\theta)\overline{Y_1(\theta)}\, d\theta
	=0,
	\qquad Y_1(\theta):=e^{i\theta}.
\end{equation}
Second, when $d=4$, even if one works on the domain $H^2(\R^4\setminus\{0\})$, the inequality~\eqref{eq:Rellich-classical} gives a trivial contribution, as mentioned above. 
Indeed~\eqref{eq:Rellich-classical} descends from the following estimate (see~\cite[Section 7, pag. 94]{RB1969})
\begin{equation}\label{eq:Rellich-preliminary}
	\int_{\R^d} |\Delta \psi(x)|^2\, dx
	\geq 
	\frac{d^2(d-4)^2}{16}\int_{\R^d} \frac{|\psi(x)|^2}{|x|^4}\, dx
	+ p_0\int_{\R^d} \frac{|\psi(x)|^2}{|x|^4}\, dx,
	\qquad p_0:=\min_{k\in \N_0}\Big[c_k \Big(\frac{d(d-4)}{2} + c_k\Big)\Big],
\end{equation}
with $c_k:=k(k+d-2),$ $k\in \N_0$ being the eigenvalues of the Laplace-Beltrami operator $-\Delta_{\mathbb{S}^{d-1}}.$ If $d=4,$ then the first term in the right-hand side of~\eqref{eq:Rellich-preliminary} disappears, whereas $p_0=\min c_k^2=0.$ This gives the claimed trivial Rellich inequality in $d=4.$ If $d=2,$ then $p_0=\min k(k-2).$ Notice that $k^2(k^2-2)\geq 0$ if $k\neq 1,$ thus the Rellich inequality~\eqref{eq:Rellich-classical} holds also in $d=2$ with constant $C(2)=1$ as soon as $f_1(r)=0,$ \emph{i.e.} when $\psi$ satisfies~\eqref{eq:orth-cond}.    

\medskip
\noindent
Moving to the discussion about fact (ii), on the same line of the work by Laptev and Weidl~\cite{LW1999}, Evans and Lewis~\cite{EL2005} showed that, for $d=2,4$, the Rellich inequality
\begin{equation}\label{eq:Rellich-magnetic}
	\int_{\R^d} |\Delta_{A} \psi(x)|^2\, dx
	\geq \widetilde C(d) \int_{\R^d} \frac{|\psi(x)|^2}{|x|^4}\, dx
\end{equation} 
holds true for any $\psi\in H^2_A:=\left\{f\in L^2(\R^d):\int_{\R^d}|\Delta_A f|^2<\infty\right\}$.
Here $A$ is the Aharonov-Bohm potential in \eqref{eq:AB} when $d=2$, or a higher dimensional generalization if $d\geq3$ (see~\eqref{eq:AB-gen} below).  As for the constant $\widetilde C(d)$, we have $\widetilde C(2)=\min_{m\in \Z}((m+\widetilde\Psi)^2 -1)^2$ and $\widetilde C(4)=\min_{m\in \Z'}((m+\widetilde\Psi)^2 -1)^2,$ where $\Z'=\{m\colon (m+\widetilde \Psi)^2\geq 1\}.$
If  $\widetilde \Psi\in \Z,$ then $\widetilde C(2)=\widetilde C(4)=0.$ Moreover, when $d=2,$ if one assumes the orthogonality condition~\eqref{eq:orth-cond}, then $\widetilde C(2)=1$\footnote{By the gauge invariance, in the case $\widetilde{\Psi}\in \Z$, the Hamiltonian $-\Delta_A$ is unitarily equivalent to the free Hamiltonian. Thus $\widetilde C(2)=0$ if and only if $m=\pm1.$ Condition~\eqref{eq:orth-cond} ensures that the minimum is taken over $\Z\setminus\{-1,1\}.$ This yields $\widetilde C(2)=1.$}. 
%Thus Rellich's original results for $d=2,4$ are recovered.

\medskip
\noindent
As far as we know, improvements upon Hardy-Rellich inequalities~\eqref{eq:HardyRellich-classical} in the same style as above are still missing in the literature. The purpose of this paper is to fill this gap. Such improvements descend from a more general result, which is the main contribution of this paper. 
\begin{theorem}[Improved weighted Hardy-Rellich]\label{thm:main-general}
	In dimension $d\geq2$, let $\Lambda_\omega$ be a non-negative, self-adjoint operator with domain $\Dom(\Lambda_\omega)\subset L^2(\mathbb{S}^{d-1};d\omega)$. Assume that $\Lambda_\omega$ has purely discrete spectrum, consisting of isolated eigenvalues $\lambda_m,$ $m\in \mathcal{I}$ (repeated  according to multiplicity), which can accumulate only at infinity, with corresponding normalized eigenfunctions $u_m,$ $m\in \mathcal{I},$ being $\mathcal{I}$ a countable index set. Denote by
		$L_r:=-\frac{\partial^2}{\partial r^2} - \frac{d-1}{r}\frac{\partial}{\partial r}$,
	and define the non-negative operator 
	\begin{equation}\label{eq:operator}
		\mathcal{L}:=L_r + \tfrac{1}{r^2}\Lambda_\omega 
	\end{equation}		
	acting on the set
	\begin{equation}\label{eq:domain}
		\Dom(\mathcal{L}):=
		\{\psi\colon \psi \in C^{\infty}_0(\R^d\setminus\{0\}),
		\, \psi(r,\cdot)\in \Dom(\Lambda_\omega)\, \text{for }  r>0,
		\, \text{and } \mathcal{L}\psi\in L^2(\R^d)
		\}.
	\end{equation}
Let $\alpha\in \R.$ Then, for all $\psi\in \Dom(\mathcal{L})$ such that $|\cdot|^{-\alpha/2}\mathcal{L}\psi \in L^2(\R^d)$ we have
\begin{equation}\label{eq:main}
	\int_{\R^d} \frac{|\mathcal{L}\psi(x)|^2}{|x|^\alpha}\, dx
	\geq C(d,\alpha) \int_{\R^d} \frac{\mathcal{D}\psi(x)}{|x|^{\alpha+2}}\, dx,
\end{equation} 
where $\mathcal{D}$ is the first-order operator defined by
%\begin{equation*}
	$\mathcal{D}\psi:=\big|\frac{\partial \psi}{\partial r}\big|^2 + \frac{1}{r^2}|\Lambda_\omega^{1/2}\psi|^2
	\footnote{ $\Lambda_\omega^{1/2}$ is the square root of the non-negative, self-adjoint operator $\Lambda_\omega.$ This operator exists and is unique by the functional calculus (see, for example,~\cite[Prop.5.13]{Schmu}). In particular, $\Lambda_\omega^{1/2}u_m=\sqrt{\lambda_m}u_m,$ where $u_m,$ $m\in \mathcal{I}$ are the eigenfunctions of $\Lambda_\omega$ and $\lambda_m$ the corresponding eigenvalues.}$,
%\end{equation*}
and where $C(d, \alpha)$ is given by
\begin{equation}\label{eq:C(d,alpha)}
	C(d,\alpha)=
	\begin{cases}\displaystyle
	\min_{m\in I}\frac{(4\lambda_m + (d+\alpha)(d-\alpha-4))^2}{4(4\lambda_m +(d-\alpha-4)^2)}, \qquad &\text{if }d-\alpha-4\neq0,\vspace{0.2cm}\\
	\min \Big((d-2)^2; \min_{\substack{m\in \mathcal{I}\\ \lambda_m\neq 0}}\lambda_m \Big),
	&\text{if } d-\alpha-4=0.
	\end{cases}
\end{equation}
\end{theorem}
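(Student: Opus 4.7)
The strategy is to decompose $\psi$ in the eigenbasis $(u_m)_{m\in\mathcal{I}}$ of $\Lambda_\omega$, thereby reducing \eqref{eq:main} to a one-parameter family of sharp one-dimensional weighted inequalities indexed by the eigenvalues $\lambda_m$, and then to take the infimum.

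For $\psi \in \Dom(\mathcal{L})$ I would write $\psi(r,\omega) = \sum_{m \in \mathcal{I}} \psi_m(r)\, u_m(\omega)$ with $\psi_m(r) := \langle \psi(r,\cdot), u_m \rangle_{L^2(\mathbb{S}^{d-1})}$. Since $\Lambda_\omega u_m = \lambda_m u_m$ and $\Lambda_\omega^{1/2} u_m = \sqrt{\lambda_m}\, u_m$,
\begin{equation*}
\mathcal{L}\psi \;=\; \sum_m \Bigl(L_r \psi_m + \tfrac{\lambda_m}{r^2}\psi_m\Bigr) u_m,
\qquad
\Lambda_\omega^{1/2}\psi \;=\; \sum_m \sqrt{\lambda_m}\,\psi_m\, u_m.
\end{equation*}
Using Parseval in $L^2(\mathbb{S}^{d-1};d\omega)$ together with $dx = r^{d-1}\,dr\,d\omega$, both sides of \eqref{eq:main} split diagonally in $m$ (no angular cross terms appear in $\mathcal{D}\psi$ either), and it suffices to prove the one-dimensional bound
\begin{equation*}
\int_0^\infty \Bigl| f'' + \tfrac{d-1}{r} f' - \tfrac{\lambda}{r^2} f \Bigr|^2 r^{d-1-\alpha}\, dr \;\geq\; C_m \int_0^\infty \Bigl( |f'|^2 + \tfrac{\lambda}{r^2}|f|^2 \Bigr) r^{d-3-\alpha}\, dr
\end{equation*}
separately for each $m$ with $\lambda=\lambda_m\geq 0$ and $f=\psi_m$, and then to set $C(d,\alpha):=\inf_{m\in\mathcal{I}} C_m$. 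Note that $\psi_m$ has compact support in $(0,\infty)$, so no boundary contributions appear in any integration by parts.

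For the reduced one-dimensional inequality I would expand the square on the left, integrate by parts to move derivatives and collect terms of the form $\int |f'|^2 r^{d-3-\alpha}\,dr$, $\int |f|^2 r^{d-5-\alpha}\,dr$ and cross terms, and introduce a free parameter $\beta$; writing the difference $\int |Lf+\tfrac{\lambda}{r^2}f|^2 r^{d-1-\alpha} - \beta \int (|f'|^2+\tfrac{\lambda}{r^2}|f|^2) r^{d-3-\alpha}$ as a non-negative sum of squared terms produces a quadratic condition in $\beta$. Equivalently, substituting $f(r) = r^\sigma g(r)$ for a free $\sigma$ reduces the problem to a standard weighted 1D Hardy inequality for $g$ or $g'$; in either formulation the optimization in the auxiliary parameter is a quadratic rational problem whose solution gives exactly $C_m = (4\lambda_m + (d+\alpha)(d-\alpha-4))^2 \big/ \bigl(4(4\lambda_m + (d-\alpha-4)^2)\bigr)$, valid whenever the denominator does not vanish. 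This optimization is the same in spirit as the strategy of Tertikas--Zographopoulos and Cazacu, now adapted to a general weight $r^{-\alpha}$ and to a generic eigenvalue $\lambda_m$ in place of $k(k+d-2)$.

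When $d-\alpha-4=0$ the denominator $4\lambda_m + (d-\alpha-4)^2$ degenerates for the modes with $\lambda_m=0$, so that case must be treated separately. For such modes only the radial derivative remains on the right-hand side and the reduced inequality follows from a 1D weighted Hardy inequality applied to $f'$, giving the constant $(d-2)^2$; for modes with $\lambda_m>0$ a direct integration by parts shows that the right-hand side is controlled by $\lambda_m$ times the radial norm, whence the minimum of these two contributions produces the second line of \eqref{eq:C(d,alpha)}. The main technical obstacle is precisely the algebraic optimization in the free parameter inside the one-dimensional inequality: careful bookkeeping of the integration by parts is essential to obtain the sum-of-squares decomposition in a usable form, and the minimization of the resulting rational quadratic must be executed with care to land exactly on \eqref{eq:C(d,alpha)}. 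Once that 1D best constant is identified, the fact that $C_m\to\infty$ as $\lambda_m\to\infty$ makes the infimum finite and attained on some mode, and the multi-dimensional statement \eqref{eq:main} follows by summing over $m$ and taking the infimum.
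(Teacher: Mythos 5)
Your proposal is correct and follows essentially the same route as the paper: decomposition in the eigenbasis of $\Lambda_\omega$, Parseval reduction of both sides to diagonal sums of one-dimensional weighted integrals, and an optimization over an auxiliary parameter in the 1D weighted Hardy inequality (the paper's $\varepsilon$-splitting of the cross terms) to produce the constant $C_m$, with the modes $\lambda_m=0$ and the degenerate case $d-\alpha-4=0$ handled separately exactly as you indicate. The only work left implicit in your plan is the bookkeeping of the integrations by parts and the explicit balancing of the two Hardy applications, which the paper carries out in Lemma 2.1 and the choice of $\varepsilon$.
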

\begin{remark}\label{rem:cut}
Due to the general statement of Theorem~\ref{thm:main-general}, which aims at covering \emph{any} dimension $d\geq 2$ and \emph{any} power-weight $\alpha\in \R,$ we needed to restrict ourselves to considering functions $C^\infty_0(\R^d\setminus \{0\})$. Despite that, it is clear that, by density arguments, if one restricts to particular situations according to the values of $d$ and $\alpha,$ then the assumption of cutting the origin can be dropped. For example, if $d\geq 3,$ $\alpha=0$ and $\mathcal{L}=-\Delta,$ then~\eqref{eq:main} holds for any $\psi\in C^\infty_0(\R^d)$ (see~\cite{Cazacu2019}). 
\end{remark}

\begin{remark}
We stress that the right hand side of~\eqref{eq:main} can be written in terms of the \emph{Carré du Champ} associated to $\mathcal{L}.$
\begin{definition}\label{def:carreduchamp}
	Given $\mathcal L$ a linear operator on $L^2(\R^d;\C),$ the Carré du Champ associated to $\mathcal{L}$ is the sesquilinear form $\Gamma$ on $C^\infty_0(\R^d)\times C^\infty_0(\R^d)$ defined by
	\begin{equation*}
		2\Gamma(\psi, \phi)=\overline{\psi}\mathcal{L}\phi + \overline{\mathcal{L}\psi}  \phi - \mathcal{L}(\overline{\psi} \phi).
	\end{equation*}
	In particular
	\begin{equation*}
		2\Gamma(\psi):=2\Gamma(\psi,\psi)
		= 2\Re(\overline{\psi}\mathcal{L}\psi) - \mathcal{L}|\psi|^2.
	\end{equation*}
\end{definition} 
\noindent
Using integration by parts one sees that
\begin{equation}\label{eq:cdc-corr}
	\int_{\R^d} \Gamma(\psi)|x|^\beta\, dx
	=\int_{\R^d} |\partial_r \psi|^2 |x|^{\beta}\, dx
	+ \int_{\R^d} |\Lambda_\omega^{1/2}\psi|^2 |x|^{\beta-2}\, dx 
	- \frac{1}{2}\int_{\R^d} \frac{|\psi|^2}{|x|^2}\Lambda_\omega |x|^\beta\, dx,
\end{equation}
in other words the right hand side of~\eqref{eq:main} can be written in terms of the Carré du Champ provided that an angular correction is added. More specifically, we have 
\begin{equation*}
	\int_{\R^d} \mathcal{D}(\psi) |x|^\beta\, dx
	=\int_{\R^d} \Gamma(\psi)|x|^\beta\, dx + \frac{1}{2}\int_{\R^d} \frac{|\psi|^2}{|x|^2}\Lambda_\omega |x|^\beta\, dx.
\end{equation*}
For the sake of completeness we show~\eqref{eq:cdc-corr}. From Definition~\ref{def:carreduchamp} one has
\begin{equation}\label{eq:Gamma-pre}
\begin{split}
\int_{\R^d} \Gamma(\psi)|x|^\beta\, dx
&= \int_{\R^d} \Re\big (\overline{\psi}\mathcal{L}\psi\big ) |x|^{\beta}\, dx
-\frac{1}{2} \int_{\R^d} \mathcal{L}(|\psi|^2) |x|^{\beta}\, dx\\
&=\int_{\R^d} \Re\big (\overline{\psi}L_r \psi\big ) |x|^{\beta}\, dx
+ \int_{\R^d} \Re\big (\overline{\psi}\Lambda_\omega\psi\big ) |x|^{\beta-2}\, dx
-\frac{1}{2} \int_{\R^d} |\psi|^2 \mathcal{L} |x|^{\beta}\, dx,
\end{split}
\end{equation}
where in the last identity we have used that $\mathcal{L}$ can be written as $\mathcal{L}=L_r + \frac{1}{r^2}\Lambda_\omega$ and that $\mathcal{L}$ is self-adjoint.
Using integration by parts, one easily checks that
\begin{equation}\label{eq:int-parts}
	\langle f, L_r g\rangle_{L^2(\R^d)}=\langle \partial_r f, \partial_r g\rangle_{L^2(\R^d)},
	\qquad \text{and} \qquad
	\langle u, \Lambda_\omega v \rangle_{L^2(\mathbb{S}^{d-1})}
	=\langle \Lambda_\omega^{1/2} u, \Lambda_\omega^{1/2} v \rangle_{L^2(\mathbb{S}^{d-1})}.		
\end{equation}
Using~\eqref{eq:int-parts} and $L_r |x|^\beta=-\beta(d+\beta -2)|x|^{\beta -2}$ in~\eqref{eq:Gamma-pre} we get
\begin{multline*}
\int_{\R^d} \Gamma(\psi)|x|^\beta\, dx
=\int_{\R^d} |\partial_r \psi|^2 |x|^{\beta}\, dx 
+ \beta \int_{\R^d} \Re(\overline{\psi}\partial_r \psi)|x|^{\beta-1}\, dx\\
+ \int_{\R^d} |\Lambda_\omega^{1/2}\psi|^2 |x|^{\beta-2}\, dx
- \frac{\beta(d+\beta-2)}{2}\int_{\R^d} |\psi|^2 |x|^{\beta-2}
-\frac{1}{2} \int_{\R^d} \frac{|\psi|^2}{|x|^2} \Lambda_\omega |x|^{\beta}\, dx.
\end{multline*}
Integrating by parts with respect to the radial variable $r,$ the second term cancels with the last but one term and thus~\eqref{eq:cdc-corr} follows.
\end{remark}

\medskip
\noindent
Theorem \ref{thm:main-general} is stated in dimension $d\geq2$, in order to describe a non-trivial contribution given by the angular operator $\Lambda_\omega$. Anyway an analogous result holds true in $d=1$ as well. More precisely, the following theorem is an immediate consequence of the classical 1D-weighted Hardy inequality (see~\eqref{eq:1d-Hardy} below) applied to $\psi'.$ 
\begin{theorem}[1D-weighted Hardy-Rellich]\label{thm:1D}
Let $d=1.$ Then for all $\psi\in C^\infty_0(\R\setminus \{0\}),$ we have
\begin{equation}\label{eq:1dHardy-Rellich}
	\int_\R \frac{|\psi''(x)|^2}{|x|^\alpha}\, dx\geq \frac{(\alpha+1)^2}{4}\int_\R \frac{|\psi'(x)|^2}{|x|^{\alpha+2}}\, dx.
\end{equation}
\end{theorem}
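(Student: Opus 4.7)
The plan is to reduce the statement immediately to the classical 1D weighted Hardy inequality
\begin{equation*}
\int_\R \frac{|\phi'(x)|^2}{|x|^\alpha}\, dx \geq \frac{(\alpha+1)^2}{4}\int_\R \frac{|\phi(x)|^2}{|x|^{\alpha+2}}\, dx,
\qquad \phi\in C^\infty_0(\R\setminus\{0\}),
\end{equation*}
which is precisely the estimate referred to in the excerpt as \eqref{eq:1d-Hardy}. Indeed, given $\psi\in C^\infty_0(\R\setminus \{0\})$, the derivative $\phi:=\psi'$ still belongs to $C^\infty_0(\R\setminus \{0\})$; substituting $\phi=\psi'$ in the inequality above yields \eqref{eq:1dHardy-Rellich} at once with the same constant $(\alpha+1)^2/4$. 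So the only actual task is to establish the classical 1D weighted Hardy inequality itself.

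To prove the latter, I would split the integral over $\R$ into the two half-lines $(-\infty,0)$ and $(0,\infty)$ and handle them separately, exploiting the fact that the support of $\phi$ is compact and separated from the origin. On $(0,\infty)$, assuming first $\alpha\neq -1$ (the case $\alpha=-1$ is trivial, since then the right-hand side vanishes), I would use the identity
$$
x^{-\alpha-2}=-\frac{1}{\alpha+1}\frac{d}{dx}\bigl(x^{-\alpha-1}\bigr),
$$
multiply by $|\phi(x)|^2$, and integrate by parts in $x$. The boundary contributions at $0$ and $+\infty$ vanish by the support assumption on $\phi$, and the remaining bulk term is estimated via the Cauchy--Schwarz inequality. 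A direct computation shows that this reproduces exactly the constant $(\alpha+1)^2/4$. The same argument applies verbatim on $(-\infty,0)$, and summing the two contributions yields the 1D weighted Hardy inequality on the whole of $\R$.

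There is really no obstacle in this argument: the cut at the origin built into the test-function class $C^\infty_0(\R\setminus\{0\})$ is precisely what allows the integration by parts to be performed uniformly for every $\alpha\in \R$, and the sharp constant $(\alpha+1)^2/4$ emerges automatically from the Cauchy--Schwarz step. Consequently, the proof of Theorem~\ref{thm:1D} reduces essentially to a one-line substitution $\phi=\psi'$ in the classical 1D weighted Hardy estimate, in complete analogy with the higher-dimensional result contained in Theorem~\ref{thm:main-general}.
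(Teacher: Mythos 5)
Your proposal is correct and follows essentially the same route as the paper: the authors also obtain Theorem~\ref{thm:1D} by applying the classical 1D weighted Hardy inequality \eqref{eq:1d-Hardy} directly to $\psi'$ on each half-line, the only difference being that they cite that inequality from the literature while you sketch its standard integration-by-parts/Cauchy--Schwarz proof. Both the substitution $\phi=\psi'$ and the handling of the exceptional value $\alpha=-1$ are handled correctly.
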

\begin{remark}
	Notice that in the weighted free case $\alpha=0,$~\eqref{eq:1dHardy-Rellich}  gives the claimed Hardy-Rellich inequality~\eqref{eq:HardyRellich-classical} for $d=1$ with $C(1)=1/4.$
\end{remark}

\medskip
\noindent
We also claim that the constant $C(d,\alpha)$ in~\eqref{eq:C(d,alpha)} is sharp and not attained. To show this in the complete generality, we construct a minimizing sequence which is suitably supported far away from the origin. To this aim, given $\epsilon>0$, we introduce a smooth cut-off function $g_\epsilon\in C^\infty_0(\R^+)$ such that 
\begin{equation}\label{eq:cutoff}
	g_\epsilon (r)=
	\begin{cases}
		0, \quad &\text{if } 0\leq r\leq \epsilon\; \text{ or } r\geq1/\epsilon,\\
		1, &\text{if } 2\epsilon\leq r\leq 1/2\epsilon,
		%&0, &\text{if } r\geq 2,
	\end{cases}
\end{equation}
$0\leq g_\epsilon\leq 1$ in $0\leq r<\infty$, and
\begin{equation*}
\begin{cases}
	|g_\epsilon'(r)|\leq \frac{c}{\epsilon}, 
	\quad
	\hspace{0.08cm} |g_\epsilon''(r)|\leq \frac{c}{\epsilon^2},
\quad &\text{for } \epsilon\leq r\leq 2\epsilon,\\
	|g_\epsilon'(r)|\leq c\epsilon, 
	\quad
	|g_\epsilon''(r)|\leq c\epsilon^2,
	\quad &\text{for } 1/2\epsilon\leq r\leq 1/\epsilon,\\
\end{cases}
\end{equation*}   
for some constant $c>0$. 
We have the following result.
\begin{theorem}[Optimality of \eqref{eq:main}]\label{thm:minimizing}
In dimension $d\geq 2$, for any $\epsilon$, define
\begin{equation}\label{eq:minimizing}
	\psi_\epsilon(x):=
	\begin{cases}
	|x|^{\frac{-(d-4) +\alpha}{2}} g_\epsilon(|x|) u_{m_0}\big(\tfrac{x}{|x|}\big), \qquad  &\text{if } d-\alpha-4\neq 0 \text{  \;or\; } C(d,\alpha)=\lambda_{m_0},\\
	h_\epsilon(|x|), & \text{if } d-\alpha-4=0 \text{ and } C(d,\alpha)=(d-2)^2,
	\end{cases}
\end{equation}
where $m_0\in \mathcal{I}$ is a minimizing index in~\eqref{eq:C(d,alpha)}, $u_{m_0}$ is the eigenfunction corresponding to the eigenvalue $\lambda_{m_0}$ of $\Lambda_{\omega}$ and $h_\epsilon$ is defined such that
\begin{equation}\label{eq:h_eps}
	h_\epsilon'(r)=r^{-1}g_\epsilon(r), \qquad r=|x|.
\end{equation}
 Then $\{\psi_\epsilon\}_{\epsilon>0}\subset \Dom(\mathcal{L})$ is a minimizing sequence for $C(d,\alpha),$ \emph{i.e.}
\begin{equation*}
\frac{\int_{\R^d} |\mathcal{L}\psi_\epsilon(x)|^2/|x|^\alpha\, dx}{\int_{\R^d} \mathcal{D}(\psi_\epsilon)(x)/|x|^{\alpha+2}\, dx}
\searrow
C(d,\alpha),
\; \text{as } \epsilon \searrow 0.
\end{equation*}
Besides, the constant $C(d,\alpha)$ is not attained in $\Dom(\mathcal{L}).$
\end{theorem}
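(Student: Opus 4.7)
The plan is to verify the claim in two separate steps: first, a direct computation shows that $\{\psi_\epsilon\}_{\epsilon>0}$ realizes $C(d,\alpha)$ as a decreasing limit of ratios; second, a contradiction argument based on the spherical-harmonic decomposition of \eqref{eq:main} rules out attainment in $\Dom(\mathcal{L})$. The choice in \eqref{eq:minimizing} is dictated by the fact that the formal pure power $r^\beta u_{m_0}$, with $\beta:=(\alpha-d+4)/2$, is scale-invariant for the inequality: one computes $\mathcal{L}(r^\beta u_{m_0})=K r^{\beta-2} u_{m_0}$ with $K:=\lambda_{m_0}-\beta(\beta+d-2)$ and $\mathcal{D}(r^\beta u_{m_0})=(\beta^2+\lambda_{m_0}) r^{2\beta-2}|u_{m_0}|^2$. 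The algebraic identity $4K=4\lambda_{m_0}+(d+\alpha)(d-\alpha-4)$ shows that the pointwise ratio $K^2/(\beta^2+\lambda_{m_0})$ is exactly $C(d,\alpha)$ whenever $m_0$ realizes the minimum in \eqref{eq:C(d,alpha)}. Since $r^\beta u_{m_0}\notin\Dom(\mathcal{L})$, one regularizes by the cutoff $g_\epsilon$ to produce the genuine test function $\psi_\epsilon$.

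To execute the calculation I would substitute $\psi_\epsilon=r^\beta g_\epsilon u_{m_0}$ and decompose $\mathcal{L}\psi_\epsilon=K r^{\beta-2}g_\epsilon u_{m_0}+R_\epsilon$, where the remainder $R_\epsilon$ is supported on the two transition annuli $\{\epsilon\leq r\leq 2\epsilon\}$ and $\{1/(2\epsilon)\leq r\leq 1/\epsilon\}$ and is a linear combination of $r^{\beta-1}g_\epsilon'$ and $r^\beta g_\epsilon''$ with coefficients depending only on $d$ and $\beta$; an analogous splitting holds for $\mathcal{D}\psi_\epsilon$. The exponent $\beta$ is tuned so that $r^{2\beta-4-\alpha}\cdot r^{d-1}=r^{-1}$, hence the principal integrals on the bulk $\{2\epsilon\leq|x|\leq 1/(2\epsilon)\}$ both diverge logarithmically as $2\log(1/(2\epsilon))$ with ratio exactly $C(d,\alpha)$. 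The remainder contributions are controlled via the bounds $|g_\epsilon^{(k)}|\leq c\epsilon^{-k}$ on $[\epsilon,2\epsilon]$ and $|g_\epsilon^{(k)}|\leq c\epsilon^{k}$ on $[1/(2\epsilon),1/\epsilon]$ together with the explicit $r$-scaling on the respective annuli, yielding $O(1)$ bounds for every cross and remainder integral, which are negligible against the logarithmic bulk. The radial sub-case ($d-\alpha-4=0$, $C(d,\alpha)=(d-2)^2$) is handled identically with $\psi_\epsilon=h_\epsilon(|x|)$: from \eqref{eq:h_eps} one gets $L_r h_\epsilon=-(d-2)r^{-2}g_\epsilon-r^{-1}g_\epsilon'$ and $\mathcal{D}h_\epsilon=r^{-2}g_\epsilon^2$, producing the bulk ratio $(d-2)^2$ by the same mechanism.

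To rule out attainment, I argue by contradiction: suppose $\psi^\ast\in\Dom(\mathcal{L})$ saturates \eqref{eq:main}. Expanding $\psi^\ast=\sum_{m\in\mathcal{I}} f_m^\ast(r)u_m(\omega)$ in the orthonormal basis of eigenfunctions of $\Lambda_\omega$, both integrals in \eqref{eq:main} split as sums of mode-wise one-dimensional integrals in the radial profiles $f_m^\ast$, each obeying the corresponding single-mode inequality with constant $C_m(d,\alpha)\geq C(d,\alpha)$. Equality then forces $f_m^\ast\equiv 0$ at every index with $C_m(d,\alpha)>C(d,\alpha)$ and saturation of the single-mode inequality at the surviving minimizing index; inspecting the Cauchy--Schwarz step used to establish that one-dimensional inequality in the proof of Theorem \ref{thm:main-general} pins the surviving profile to the power $f_{m_0}^\ast(r)=c\,r^\beta$, incompatible with $\psi^\ast\in C^\infty_0(\R^d\setminus\{0\})$ unless $c=0$. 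I expect the main technical obstacle in step one to be the careful bookkeeping of the transition-region remainders, in particular showing that the cross terms between the principal $r^{\beta-2}g_\epsilon$ factor and the $g_\epsilon'$, $g_\epsilon''$ factors integrate to $O(1)$; in step two, the delicate point is to extract the equality case of the mode-wise inequality cleanly, which will require inspecting the precise form of the proof of Theorem \ref{thm:main-general}.
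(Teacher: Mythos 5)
Your proposal follows essentially the same route as the paper: the same quasi-homogeneous profile $r^{-(d-\alpha-4)/2}g_\epsilon u_{m_0}$ (resp. $h_\epsilon$ in the degenerate radial case), the same splitting into a logarithmically divergent bulk on $[2\epsilon,1/(2\epsilon)]$ with ratio exactly $C(d,\alpha)$ plus $O(1)$ transition-annulus remainders controlled by the stated bounds on $g_\epsilon',g_\epsilon''$, and the same non-attainment argument via forcing equality in the mode-wise 1D Hardy inequalities, whose only solutions $a_mr^{-(d-\alpha-4)/2}+b_m$ are inadmissible. The only (cosmetic) difference is that you compute $\mathcal{L}(r^\beta u_{m_0})$ pointwise rather than passing through the 1D reduction of Lemma~\ref{lemma:1d-reduction}; the algebra $4K=4\lambda_{m_0}+(d+\alpha)(d-\alpha-4)$ and the resulting bulk ratio check out.
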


\medskip
\noindent
We present now some interesting particular cases of Theorem~\ref{thm:main-general}, which show the analogous improvements as the above mentioned ones for the Hardy and the Rellich inequalities.
\begin{theorem}\label{thm:general-electric}
	Assume $d\geq 2.$ Let $a\in L^\infty(\mathbb{S}^{d-1}; d\theta)$ be a non-negative real-valued function and consider the non-negative operator $-\Delta_{a(\theta)}:=-\Delta + \frac{a(\theta)}{|x|^2}.$ Then for all $\psi\in C^\infty_0(\R^d\setminus \{0\}),$
	\begin{equation}\label{eq:general-electric}
		\int_{\R^d} \frac{|-\Delta_{a(\theta)}\psi(x)|^2}{|x|^\alpha}\, dx \geq C_a(d,\alpha)\Bigg[ \int_{\R^d} \frac{|\nabla \psi(x)|^2}{|x|^{\alpha + 2}}\, dx + \int_{\R^d} a(\theta) \frac{|\psi(x)|^2}{|x|^{\alpha+ 4}}\, dx\Bigg],
	\end{equation} 
	where $C_{a}(d,\alpha)$ is given by 
	\begin{equation*}
	C_{a}(d,\alpha)=
	\begin{cases}\displaystyle
		\min_{k\in \N_0}\frac{(4\mu_k + (d+\alpha)(d-\alpha-4))^2}{4(4\mu_k +(d-\alpha-4)^2)}, \qquad &\text{if }d-\alpha-4\neq0,\vspace{0.2cm}\\
	\min \big((d-2)^2; \min_{\substack{k\in \N_0\\ \mu_k\neq 0}}\mu_k \big),
	 &\text{if } d-\alpha-4=0.		
	 \end{cases}
\end{equation*}
Here $\mu_k,$ with $k=0,1,\dots$ are the discrete eigenvalues of the angular operator $-\Delta_{\mathbb{S}^{d-1}} + a(\theta).$ Moreover $\mu_0\geq \ess \inf_{\mathbb{S}^{d-1}} a(\theta).$
\end{theorem}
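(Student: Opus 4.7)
The strategy is to recognise Theorem~\ref{thm:general-electric} as a direct specialisation of Theorem~\ref{thm:main-general}, with the angular operator chosen as
\begin{equation*}
\Lambda_\omega:=-\Delta_{\mathbb S^{d-1}}+a(\theta),\qquad \Dom(\Lambda_\omega)=\Dom(-\Delta_{\mathbb S^{d-1}})\subset L^2(\mathbb S^{d-1}).
\end{equation*}
First I would check that $\Lambda_\omega$ fits into the framework of Theorem~\ref{thm:main-general}: since $a\in L^\infty(\mathbb S^{d-1})$ is real and non-negative, $\Lambda_\omega$ is a bounded symmetric perturbation of the self-adjoint operator $-\Delta_{\mathbb S^{d-1}}$, which has compact resolvent; hence $\Lambda_\omega$ is self-adjoint with purely discrete spectrum $\{\mu_k\}_{k\in\N_0}$ accumulating only at $+\infty$, and non-negative because both summands are. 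Writing $-\Delta_{a(\theta)}$ in spherical coordinates yields
\begin{equation*}
-\Delta_{a(\theta)}=L_r+\tfrac{1}{r^2}\Lambda_\omega=\mathcal L,
\end{equation*}
so Theorem~\ref{thm:main-general} applies verbatim, giving \eqref{eq:main} with the constant $C(d,\alpha)$ in \eqref{eq:C(d,alpha)} determined by the eigenvalues $\mu_k$; this produces the claimed $C_a(d,\alpha)$ once one reads off the two cases.

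The second step is to rewrite the right-hand side of \eqref{eq:main}, i.e. $\int_{\R^d}\mathcal D\psi\,|x|^{-\alpha-2}\,dx$, in the form stated in \eqref{eq:general-electric}. In spherical coordinates $|\nabla\psi|^2=|\partial_r\psi|^2+r^{-2}|\nabla_{\mathbb S^{d-1}}\psi|^2$, and by the self-adjointness identity \eqref{eq:int-parts} applied on the sphere,
\begin{equation*}
\int_{\mathbb S^{d-1}}|\Lambda_\omega^{1/2}\psi|^2\,d\omega=\langle \psi,\Lambda_\omega\psi\rangle_{L^2(\mathbb S^{d-1})}=\int_{\mathbb S^{d-1}}\bigl(|\nabla_{\mathbb S^{d-1}}\psi|^2+a(\theta)|\psi|^2\bigr)\,d\omega.
\end{equation*}
Integrating against $r^{d-1-\alpha-2}$ in the radial variable and combining with $\int|\partial_r\psi|^2|x|^{-\alpha-2}\,dx$ gives
\begin{equation*}
\int_{\R^d}\frac{\mathcal D\psi}{|x|^{\alpha+2}}\,dx=\int_{\R^d}\frac{|\nabla\psi|^2}{|x|^{\alpha+2}}\,dx+\int_{\R^d}a(\theta)\frac{|\psi|^2}{|x|^{\alpha+4}}\,dx,
\end{equation*}
which matches exactly the bracket in \eqref{eq:general-electric}.

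Finally, for the lower bound $\mu_0\geq \essinf_{\mathbb S^{d-1}}a(\theta)$ I would invoke the Rayleigh quotient characterisation: for every normalised $u\in\Dom(\Lambda_\omega)$,
\begin{equation*}
\langle u,\Lambda_\omega u\rangle_{L^2(\mathbb S^{d-1})}=\int_{\mathbb S^{d-1}}|\nabla_{\mathbb S^{d-1}}u|^2\,d\omega+\int_{\mathbb S^{d-1}}a(\theta)|u|^2\,d\omega\geq \essinf_{\mathbb S^{d-1}}a(\theta),
\end{equation*}
and take the infimum. There is no real obstacle in the argument; the only mildly delicate point is to make sure that the functional-analytic hypotheses of Theorem~\ref{thm:main-general} are met, specifically the discreteness of $\sigma(\Lambda_\omega)$ under a bounded angular perturbation, which is standard Weyl-type perturbation theory, and that the domain condition $\psi(r,\cdot)\in\Dom(\Lambda_\omega)$ is automatic for $\psi\in C^\infty_0(\R^d\setminus\{0\})$ since such $\psi$ restricts to smooth functions on each sphere.
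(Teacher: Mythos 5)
Your proposal is correct and follows essentially the same route as the paper: specialise Theorem~\ref{thm:main-general} to $\Lambda_\omega=-\Delta_{\mathbb S^{d-1}}+a(\theta)$ and identify $\int_{\R^d}\mathcal D\psi\,|x|^{-\alpha-2}\,dx$ with the bracket in \eqref{eq:general-electric}. The only cosmetic differences are that you verify discreteness and $\mu_0\geq\ess\inf a$ directly by bounded-perturbation theory and the Rayleigh quotient where the paper cites~\cite{FMT2007}, and you rewrite the right-hand side via $\langle\psi,\Lambda_\omega\psi\rangle$ rather than through the Carr\'e du Champ identity~\eqref{eq:cdc-corr} --- both computations amount to the same integration by parts on the sphere.
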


\begin{remark}
	Notice that the right hand side of~\eqref{eq:general-electric} is exactly the (weighted) quadratic form associated to $-\Delta_{a(\theta)}.$ The same holds for the particular case of Corollary~\ref{cor:a-d2} below.
\end{remark}
\noindent
An interesting corollary of the above result is the following, with $d=2$, and $a(\theta)\equiv a>0$.
 \begin{corollary}\label{cor:a-d2}
 	Assume $d=2.$ Let $a\geq 0$ and consider the non-negative operator $-\Delta_a:=-\Delta + \tfrac{a}{|x|^2}.$ Then for all $\psi\in C^\infty_0(\R^2\setminus \{0\}),$
	\begin{equation}\label{eq:prot-electr}
		\int_{\R^2}|-\Delta_a \psi(x)|^2\, dx
		\geq C_a \Bigg[ \int_{\R^2} \frac{|\nabla \psi(x)|^2}{|x|^{\alpha + 2}}\, dx + a \int_{\R^2} \frac{|\psi(x)|^2}{|x|^{\alpha+ 4}}\, dx\Bigg].
	\end{equation} 
	The constant $C_a$ in~\eqref{eq:prot-electr} is given by
	\begin{equation}\label{eq:C_a}
		C_a=\min_{k\in \N_0} \frac{(k^2+a-1)^2}{k^2+a+1}.
	\end{equation}
\end{corollary}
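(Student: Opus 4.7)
The plan is to obtain Corollary~\ref{cor:a-d2} as a direct specialization of Theorem~\ref{thm:general-electric}: simply take $d=2$ and choose the constant angular function $a(\theta)\equiv a$. With this choice, the operator $-\Delta_{a(\theta)}$ defined in Theorem~\ref{thm:general-electric} becomes exactly $-\Delta+\tfrac{a}{|x|^2}=-\Delta_a$, and the inequality \eqref{eq:general-electric} specializes to \eqref{eq:prot-electr} (interpreting the $\alpha$ appearing on the left-hand side of \eqref{eq:prot-electr} as $\alpha=0$, matching the weight-free norm of $-\Delta_a\psi$). Therefore only two things need to be verified: the spectrum of the relevant angular operator and the resulting expression for the sharp constant.

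First, I compute the eigenvalues $\mu_k$ of $-\Delta_{\mathbb{S}^{1}}+a$. On the one-dimensional sphere, $-\Delta_{\mathbb{S}^1}=-\partial_\theta^2$ with periodic boundary conditions, whose spectrum is the discrete set $\{k^2\colon k\in\N_0\}$ with eigenfunctions $1$ for $k=0$ and $e^{\pm ik\theta}$ (or equivalently $\cos(k\theta),\sin(k\theta)$) for $k\geq1$. Adding the constant $a$ shifts all eigenvalues, yielding
\begin{equation*}
	\mu_k=k^2+a,\qquad k\in\N_0.
\end{equation*}
In particular $\essinf_{\mathbb{S}^1}a(\theta)=a$ is indeed attained as the bottom of the spectrum $\mu_0=a$, consistent with the last sentence of Theorem~\ref{thm:general-electric}.

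Second, I substitute these eigenvalues into the formula for $C_{a}(d,\alpha)$ with $d=2$, $\alpha=0$. Since $d-\alpha-4=-2\neq 0$, the first branch of the formula applies, giving
\begin{equation*}
	C_a(2,0)
	=\min_{k\in\N_0}\frac{(4\mu_k+(d+\alpha)(d-\alpha-4))^2}{4(4\mu_k+(d-\alpha-4)^2)}
	=\min_{k\in\N_0}\frac{(4(k^2+a)-4)^2}{4(4(k^2+a)+4)}.
\end{equation*}
Factoring $16$ out of both numerator and denominator reduces this to
\begin{equation*}
	C_a=\min_{k\in\N_0}\frac{(k^2+a-1)^2}{k^2+a+1},
\end{equation*}
which is precisely \eqref{eq:C_a}. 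No step is really the main obstacle here: the content of the corollary is entirely in the specialization, and the only nontrivial input (the sharp constant and its attainability structure) has already been packaged into Theorem~\ref{thm:general-electric}.
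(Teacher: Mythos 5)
Your proposal is correct and matches the paper's (implicit) argument exactly: the paper states Corollary~\ref{cor:a-d2} as a direct specialization of Theorem~\ref{thm:general-electric} with $d=2$ and $a(\theta)\equiv a$, and your computation of the eigenvalues $\mu_k=k^2+a$ of $-\partial_\theta^2+a$ on $\mathbb{S}^1$ together with the substitution into the first branch of the formula for $C_a(d,\alpha)$ (valid since $d-\alpha-4=-2\neq0$) is precisely what is needed. Your reading of the stray $\alpha$'s on the right-hand side of \eqref{eq:prot-electr} as $\alpha=0$ is also the correct interpretation, since \eqref{eq:C_a} is exactly $C_a(2,0)$.
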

\noindent
Notice that if $a>1,$ then $C_a=\tfrac{(a-1)^2}{a+1}>0.$

\medskip
\noindent
Another consequence of Theorem~\ref{thm:main-general} goes in the direction of the results of Laptev-Weidl~\cite{LW1999} and Evans-Lewis~\cite{EL2005}. In order to state the next result we generalize the definition  of Aharonov-Bohm type potentials to any dimension $d\geq 2:$ for $(x_1, x_2, \dots, x_d)\in \R^d \setminus\{x_d=x_{d-1}=0\}$ it is defined to be the vector field
\begin{equation}\label{eq:AB-gen}
A(x_1, x_2, \dots, x_d)
=\widetilde \Psi \Bigg(\underbrace{0,0, \dots, 0}_{d-2}, -\frac{x_d}{x_{d-1}^2 + x_{d}^2}, \frac{x_{d-1}}{x_{d-1}^2 + x_{d}^2}\Bigg),
\qquad \widetilde{\Psi}\in \R.
\end{equation}

\begin{theorem}\label{thm:HB-anydimension}
	Let $d\geq 2$ and let $A$ be the Aharonov-Bohm type vector potential given by~\eqref{eq:AB-gen}.
%	 for $d\geq 3,$ respectively~\eqref{eq:AB} for $d=2,$ and let $\widetilde \Psi$ be the corresponding magnetic flux given by~\eqref{eq:magnetic-flux}. 
Then for all $\psi\in C^\infty_0(\R^d\setminus \{0\}),$
	\begin{equation}\label{eq:HR-anydimension}
		\int_{\R^d}\frac{|\Delta_A\psi(x)|^2}{|x|^\alpha}\, dx\geq C_{\textup{AB}}(d,\alpha)\int_{\R^d} \frac{|\nabla_{\!A}\psi(x)|^2}{|x|^{\alpha+2}} \, dx.  	
	\end{equation}
	The constant $C_\textup{AB}(d,\alpha)$ is given by
	\begin{equation*}
	C_\textup{AB}(d,\alpha)=
	\begin{cases}\displaystyle
	%\label{eq:C_AB}
		\min_{m\in \Z'} \frac{\left( 4(m+ \widetilde \Psi)(m + \widetilde \Psi + d-2) + (d-4-\alpha)(d+\alpha)\right)^2}{4(4(m+ \widetilde \Psi)(m + \widetilde \Psi + d-2) + (d-4-\alpha)^2) },
		 \qquad &\text{if }d-\alpha-4\neq 0,\vspace{0.2cm}\\
		\min \Big((d-2)^2;\min \{ (m+ \widetilde \Psi)^2(m + \widetilde \Psi + d-2)^2 \mid m\in\Z', m+\widetilde \Psi\neq 0, 2-d \}\Big),   &\text{if }d-\alpha-4=0,
		\end{cases}
	\end{equation*}
	where $\Z':=\{m\in \Z\colon m\leq 2-d-\widetilde \Psi\, \text{or } m\geq -\widetilde \Psi\}.$
\end{theorem}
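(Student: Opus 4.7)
The plan is to apply Theorem~\ref{thm:main-general} with $\Lambda_\omega$ equal to the magnetic Laplace--Beltrami operator $-\Delta_{\mathbf{A},\mathbb{S}^{d-1}}$ associated to the Aharonov--Bohm potential, and then to compute its spectrum. First I would check directly that the potential in \eqref{eq:AB-gen} satisfies the transversal gauge condition $x\cdot A(x)\equiv 0$ and has the homogeneous form $A(x)=|x|^{-1}\mathbf{A}(\theta)$ for a $0$-homogeneous field $\mathbf{A}\colon\mathbb{S}^{d-1}\to\R^d$. As recalled in the introduction, under these two conditions one has the pointwise decompositions
\begin{equation*}
-\Delta_A = L_r + \frac{1}{r^2}\bigl(-\Delta_{\mathbf{A},\mathbb{S}^{d-1}}\bigr),
\qquad
|\nabla_{\!A}\psi|^2=|\partial_r\psi|^2+\frac{1}{r^2}\bigl|(-i\nabla_{\mathbb{S}^{d-1}}+\mathbf{A})\psi\bigr|^2,
\end{equation*}
with $\Lambda_\omega:=-\Delta_{\mathbf{A},\mathbb{S}^{d-1}}$ non-negative, self-adjoint and with purely discrete spectrum (by compactness of the sphere and ellipticity). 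Combining the second identity with the spectral relation $\int_{\mathbb{S}^{d-1}}|\Lambda_\omega^{1/2}\psi|^2\,d\omega=\int_{\mathbb{S}^{d-1}}|(-i\nabla_{\mathbb{S}^{d-1}}+\mathbf{A})\psi|^2\,d\omega$ gives $\int_{\R^d}\mathcal{D}\psi/|x|^{\alpha+2}\,dx=\int_{\R^d}|\nabla_{\!A}\psi|^2/|x|^{\alpha+2}\,dx$, so that \eqref{eq:HR-anydimension} reduces to \eqref{eq:main} as soon as the spectrum of $\Lambda_\omega$ is known.

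The next task is therefore the spectral analysis of $\Lambda_\omega$. For $d=2$ this is immediate: the operator becomes $(-i\partial_\phi+\widetilde\Psi)^2$ on $L^2(\mathbb{S}^1)$, with eigenfunctions $e^{im\phi}$ and eigenvalues $(m+\widetilde\Psi)^2=(m+\widetilde\Psi)(m+\widetilde\Psi+d-2)$, $m\in\Z$. For $d\geq 3$ I would exploit the cylindrical symmetry of $\mathbf{A}$ around the $(x_{d-1},x_d)$-plane, introducing hyperspherical coordinates in which one angle is the polar angle $\phi$ in that plane and the remaining $d-3$ angles parameterize a copy of $\mathbb{S}^{d-3}$. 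Looking for eigenfunctions of the form $e^{im\phi}f(\theta)Y_k(\omega')$, the eigenvalue problem reduces to a Jacobi-type ODE for $f$; the lowest eigenvalue inside each sector labelled by $m$ (attained at $k=0$ and at the ground state of the Jacobi operator) is $\lambda_m=(m+\widetilde\Psi)(m+\widetilde\Psi+d-2)$, while non-axial modes produce only additional non-negative contributions. Non-negativity of $\Lambda_\omega$, which also follows from the spherical diamagnetic inequality, restricts the admissible $m$ to $\Z'$, i.e.\ $m+\widetilde\Psi\geq 0$ or $m+\widetilde\Psi\leq 2-d$.

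Plugging the eigenvalues $\lambda_m=(m+\widetilde\Psi)(m+\widetilde\Psi+d-2)$, $m\in\Z'$, into \eqref{eq:C(d,alpha)} yields at once the claimed $C_{\textup{AB}}(d,\alpha)$ in both cases; in the degenerate case $d-\alpha-4=0$, the restriction $m+\widetilde\Psi\neq 0,\,2-d$ is precisely $\lambda_m\neq 0$. The main obstacle is the spectral step: one must perform the separation of variables on $\mathbb{S}^{d-1}$ carefully (identifying the axial family as the set of minima inside each magnetic sector) and then verify that the global minimum in \eqref{eq:C(d,alpha)} is attained on $\{\lambda_m\}_{m\in\Z'}$ and not on the higher-lying non-axial eigenvalues. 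The latter rests on a monotonicity analysis of the rational function $\lambda\mapsto (4\lambda+(d+\alpha)(d-\alpha-4))^2/(4(4\lambda+(d-\alpha-4)^2))$ on the range $[\min_{m\in\Z'}\lambda_m,+\infty)$, after which the rest of the proof is a direct specialization of Theorem~\ref{thm:main-general}.
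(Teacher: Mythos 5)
Your overall architecture is the same as the paper's: write $-\Delta_A=L_r+\tfrac{1}{r^2}\Lambda_\omega$ with $\Lambda_\omega$ the magnetic Laplace--Beltrami operator, check $\int_{\R^d}\mathcal{D}(\psi)/|x|^{\alpha+2}\,dx=\int_{\R^d}|\nabla_{\!A}\psi|^2/|x|^{\alpha+2}\,dx$ via $\int_{\mathbb{S}^{d-1}}|\Lambda_\omega^{1/2}\psi|^2\,d\omega=\int_{\mathbb{S}^{d-1}}|\nabla_{d,\theta}\psi|^2\,d\omega$, identify the spectrum of $\Lambda_\omega$, and plug into Theorem~\ref{thm:main-general}. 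The difference is that the paper simply quotes the spectral result (from Thomas's thesis and~\cite{EL2005}): the spectrum of $\Lambda_{d,\theta}$ is \emph{exactly} the set $\{(m+\widetilde\Psi)(m+\widetilde\Psi+d-2)\}_{m\in\Z'}$, whereas you attempt to rederive it by separation of variables and then to dispose of the non-axial modes by a monotonicity argument.

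That last step is where your proposal has a genuine gap. The function $\lambda\mapsto \bigl(4\lambda+(d+\alpha)(d-\alpha-4)\bigr)^2/\bigl(4(4\lambda+(d-\alpha-4)^2)\bigr)$ is \emph{not} monotone on $[\min_{m\in\Z'}\lambda_m,+\infty)$: as Remark~\ref{rmk:grad} already records for the free case, it decreases to an interior minimum $x_0$ (with $c_0\le x_0\le c_1$) and then increases. So knowing only that the non-axial eigenvalues lie above the axial minimum in each magnetic sector does not let you conclude that they cannot lower the minimum in~\eqref{eq:C(d,alpha)}; a non-axial eigenvalue sitting near $x_0$ between two axial ones would a priori give a smaller value. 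The correct resolution is structural, not monotonicity: the non-axial eigenvalues of $\Lambda_{d,\theta}$ are of the form $\bigl(l+|m+\widetilde\Psi|\bigr)\bigl(l+|m+\widetilde\Psi|+d-2\bigr)$ with $l\in\N_0$, and since $x(x+d-2)=y(y+d-2)$ iff $y=x$ or $y=2-d-x$, each of these equals $\lambda_{m'}$ for $m'=m+l$ (if $m+\widetilde\Psi\ge0$) or $m'=m-l$ (if $m+\widetilde\Psi\le0$), hence $m'\in\Z'$. In other words the full spectrum already coincides with the indexed family $\{\lambda_m\}_{m\in\Z'}$, and no comparison between axial and non-axial values is needed. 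Relatedly, your assertion that ``non-negativity of $\Lambda_\omega$ restricts the admissible $m$ to $\Z'$'' is not a proof: non-negativity tells you the eigenvalues are $\ge0$, not that they are parametrized by that formula over $\Z'$. Either carry out the separation of variables to the point of exhibiting the full eigenvalue list and the re-indexing above, or cite the spectral result as the paper does; with that repaired, the rest of your argument is a correct specialization of Theorem~\ref{thm:main-general}.
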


\begin{remark}\label{rmk:mgrad-cdc}
	Notice that in the specific situation of Theorem~\ref{thm:HB-anydimension} (and Corollary~\ref{thm:Hardy-Rellich_magnetic} below), in the Hardy-Rellich inequality~\eqref{eq:HR-anydimension} it appears the magnetic gradient instead of the first order operator $\mathcal{D}$ as in the general case. Indeed, one can check that the integral identity $\int_{\R^d}\mathcal{D}(\psi)/|x|^{\alpha+2}=\int_{\R^d}|\nabla_A\psi|^2/|x|^{\alpha +2}$ (refer to the proof of Theorem~\ref{thm:HB-anydimension} for more clarifications).
\end{remark}

\medskip
\noindent
In the case $d=2$ and $\alpha=0,$ Theorem~\ref{thm:HB-anydimension} reduces to the following result, in the same style as~\eqref{eq:LW} and~\eqref{eq:Rellich-magnetic}.

\begin{corollary}\label{thm:Hardy-Rellich_magnetic}
	Assume $d=2.$ Let $A$ be the Aharonov-Bohm (AB) type vector potential given by~\eqref{eq:AB}. 
%	and let $\widetilde \Psi$ be the corresponding magnetic flux given by~\eqref{eq:magnetic-flux}. 
	Then for all $\psi\in C^\infty_0(\R^2\setminus \{0\}),$
	\begin{equation}\label{eq:Hardy-Rellich_magnetic}
		\int_{\R^2} |\Delta_A \psi(x)|^2\, dx\geq C_{\textup{AB}}\int_{\R^2} \frac{|\nabla_{\!A}\psi(x)|^2}{|x|^2}\, dx.
	\end{equation}
	The constant $C_{\textup{AB}}$ in~\eqref{eq:Hardy-Rellich_magnetic} is given by
	\begin{equation*}
		C_{\textup{AB}}=\min_{m\in \Z} \frac{((m+ \widetilde \Psi)^2-1)^2}{(m+ \widetilde \Psi)^2+1}.
	\end{equation*}
\end{corollary}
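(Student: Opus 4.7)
The plan is to obtain Corollary~\ref{thm:Hardy-Rellich_magnetic} as a direct specialization of Theorem~\ref{thm:HB-anydimension} to the parameters $d=2$ and $\alpha=0$. With these choices, inequality~\eqref{eq:HR-anydimension} reads
\begin{equation*}
\int_{\R^2} |\Delta_A \psi(x)|^2\, dx \geq C_{\textup{AB}}(2,0) \int_{\R^2} \frac{|\nabla_{\!A}\psi(x)|^2}{|x|^2}\, dx,
\end{equation*}
which matches the form of~\eqref{eq:Hardy-Rellich_magnetic}. It only remains to identify the constant $C_{\textup{AB}}(2,0)$ coming from Theorem~\ref{thm:HB-anydimension} with the constant $C_{\textup{AB}}$ asserted in the corollary.

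First I would observe that for $d=2$ the index set $\Z'$ collapses to all of $\Z$: indeed $2-d-\widetilde{\Psi} = -\widetilde{\Psi}$, so
\begin{equation*}
\Z' = \{m\in\Z : m\leq -\widetilde{\Psi} \text{ or } m\geq -\widetilde{\Psi}\} = \Z.
\end{equation*}
Since moreover $d-\alpha-4 = -2 \neq 0$, the first branch in the definition of $C_{\textup{AB}}(d,\alpha)$ applies. Substituting $d=2$, $\alpha=0$ and using $(m+\widetilde{\Psi})(m+\widetilde{\Psi}+d-2) = (m+\widetilde{\Psi})^2$, $(d-4-\alpha)(d+\alpha) = -4$, and $(d-4-\alpha)^2 = 4$ gives
\begin{equation*}
C_{\textup{AB}}(2,0) = \min_{m\in\Z} \frac{\bigl(4(m+\widetilde{\Psi})^2 - 4\bigr)^2}{4\bigl(4(m+\widetilde{\Psi})^2 + 4\bigr)} = \min_{m\in\Z} \frac{((m+\widetilde{\Psi})^2-1)^2}{(m+\widetilde{\Psi})^2+1},
\end{equation*}
after factoring out $16$ from numerator and denominator. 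This is precisely the constant $C_{\textup{AB}}$ in the statement of the corollary.

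There is essentially no obstacle beyond this algebraic specialization: the analytical content of the inequality is already packaged into Theorem~\ref{thm:HB-anydimension}, which itself follows from Theorem~\ref{thm:main-general} applied with the angular operator $\Lambda_\omega = (-i\partial_\theta - \widetilde{\Psi})^2$ on $L^2(\mathbb{S}^1)$, whose discrete spectrum is $\{(m+\widetilde{\Psi})^2\}_{m\in\Z}$. If one wished to re-verify the identification of the right-hand side with the magnetic gradient (as pointed out in Remark~\ref{rmk:mgrad-cdc}), one would use the pointwise polar formula $|\nabla_{\!A}\psi|^2 = |\partial_r\psi|^2 + r^{-2}|(\partial_\theta - i\widetilde{\Psi})\psi|^2$ together with the isometry $\|(-i\partial_\theta - \widetilde{\Psi})f\|_{L^2(\mathbb{S}^1)}^2 = \|\Lambda_\omega^{1/2}f\|_{L^2(\mathbb{S}^1)}^2$, which after integrating over $\mathbb{S}^1$ yields $\int_{\R^2} \mathcal{D}(\psi)/|x|^2\, dx = \int_{\R^2} |\nabla_{\!A}\psi|^2/|x|^2\, dx$, closing the identification.
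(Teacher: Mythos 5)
Your proposal is correct and follows exactly the route the paper intends: the corollary is presented as the immediate specialization of Theorem~\ref{thm:HB-anydimension} to $d=2$, $\alpha=0$, and your algebra (the collapse of $\Z'$ to $\Z$, the simplification of the constant, and the identification of $\mathcal{D}$ with the magnetic gradient via Remark~\ref{rmk:mgrad-cdc}) checks out. Nothing is missing.
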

\noindent
Notice that $C_\textup{AB}=0$ if and only if $\widetilde{\Psi}\in \Z$, which fits with the fact that no Hardy-Rellich inequality holds in dimension $d=2$ for the free Hamiltonian.

\medskip
\noindent
If we assume $\widetilde \Psi\in \Z$ then Theorem~\ref{thm:HB-anydimension} covers the weighted Hardy-Rellich inequalities already available for the free Hamiltonian (see~\cite{TZ2007,Beckner2008,GM2011,Cazacu2019,HT2021,HT2021arXiv,Ham}). More precisely, we have the following corollary.
\begin{corollary}\label{cor:free}
	Let $d\geq 2.$ Then for all $\psi\in C^\infty_0(\R^d\setminus \{0\})$
	\begin{equation}\label{eq:weighted-Hardy-Rellich}
		\int_{\R^d}\frac{|\Delta\psi(x)|^2}{|x|^\alpha}\, dx\geq C(d,\alpha)\int_{\R^d} \frac{|\nabla \psi(x)|^2}{|x|^{\alpha+2}} \, dx.  
	\end{equation}
	The constant $C(d,\alpha)$ is given by
	\begin{equation}\label{eq:constant-classical}
	C(d,\alpha)=
	\begin{cases}\displaystyle
		\min_{k\in \N_0} \frac{\left(4k(k + d-2) + (d-4-\alpha)(d+\alpha)\right)^2}{4(4k(k + d-2)+(d-4-\alpha)^2)}, 
		\qquad &\text{if }d-\alpha-4\neq0,\\
		\min ( (d-2)^2; (d-1)),
		&\text{if } d-\alpha-4=0.
	\end{cases}
	\end{equation}
\end{corollary}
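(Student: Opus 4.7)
The plan is to deduce Corollary~\ref{cor:free} as the specialization of Theorem~\ref{thm:HB-anydimension} to the trivial flux $\widetilde\Psi=0$. In that case the Aharonov--Bohm potential~\eqref{eq:AB-gen} vanishes identically on $\R^d\setminus\{0\}$, hence $\Delta_A=\Delta$ and $\nabla_{\!A}=\nabla$, and the inequality~\eqref{eq:HR-anydimension} collapses onto~\eqref{eq:weighted-Hardy-Rellich}. Only the identification of the constant $C_{\textup{AB}}(d,\alpha)\big|_{\widetilde\Psi=0}$ with the $C(d,\alpha)$ of~\eqref{eq:constant-classical} is then left, and this is a purely combinatorial step.

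To carry it out I would first note that at $\widetilde\Psi=0$ the admissible set becomes $\Z'=\{m\in\Z\colon m\leq 2-d\}\cup\{m\in\Z\colon m\geq 0\}$, while the relevant angular quantity reduces to $(m+\widetilde\Psi)(m+\widetilde\Psi+d-2)=m(m+d-2)$. The involution $m\mapsto 2-d-m$ preserves $\Z'$, exchanges its two half-lines, and leaves $m(m+d-2)$ invariant; as $m$ ranges over $\Z'$ the quantity $m(m+d-2)$ therefore sweeps exactly the spectrum $\{k(k+d-2)\colon k\in\N_0\}$ of $-\Delta_{\mathbb{S}^{d-1}}$, with each eigenvalue attained. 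Substitution into the non-critical branch of $C_{\textup{AB}}(d,\alpha)$ reproduces the first line of~\eqref{eq:constant-classical} verbatim.

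For the critical case $d-\alpha-4=0$, the same bijection identifies the excluded indices $m\in\{0,2-d\}$ with the excluded Laplace--Beltrami eigenvalue $k=0$, so the inner minimum reduces to $\min_{k\geq 1}k(k+d-2)=d-1$, attained at $k=1$. Combining with $(d-2)^2$ gives $\min((d-2)^2;d-1)$, matching the second line of~\eqref{eq:constant-classical}. The analytic content is already packaged into Theorems~\ref{thm:main-general} and~\ref{thm:HB-anydimension}, so there is no substantive obstacle; the one point requiring care is to work with the un-squared angular eigenvalue $\lambda_m=m(m+d-2)$ of $-\Delta_{\mathbb{S}^{d-1}}$ in the critical case, consistently with Theorem~\ref{thm:main-general}, rather than with $\lambda_m^2$.
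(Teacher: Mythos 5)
Your proposal is correct and matches the paper's own (essentially one-line) derivation: Corollary~\ref{cor:free} is obtained precisely by specializing Theorem~\ref{thm:HB-anydimension} to integer flux, with $\widetilde\Psi=0$ being the cleanest instance, and your identification of the constants via the involution $m\mapsto 2-d-m$, including the reduction of the critical branch to $\min((d-2)^2;\,d-1)$, is exactly the intended combinatorial step. You are also right that the critical branch must be read with the un-squared eigenvalue $\lambda_m=m(m+d-2)$, consistently with Theorem~\ref{thm:main-general}; the squared expression in the stated constant of Theorem~\ref{thm:HB-anydimension} is evidently a typo, as your check against $d-1=\lambda_1$ confirms.
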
 

\begin{remark}
	In the specific case of Corollary~\ref{cor:free} one easily checks that $\int_{\R^d}\mathcal{D}(\psi)/|x|^{\alpha+2}=\int_{\R^d}|\nabla \psi|^2/|x|^{\alpha+2}.$ This can be seen from~\eqref{eq:cdc-corr}, indeed one checks easily that the Carré du Champ associated to the classical Laplacian is $\Gamma(\psi)=|\nabla \psi|^2$ and moreover, since in this case $\Lambda_\omega=\Delta_{\mathbb{S}^{d-1}},$ where $\Delta_{\mathbb{S}^{d-1}}$ denotes the Laplace-Beltrami operator, the last term in~\eqref{eq:cdc-corr} cancels.
\end{remark}
\begin{remark}\label{rmk:grad}
The value of the constant $C(d,\alpha)$ in~\eqref{eq:weighted-Hardy-Rellich} has been largely investigated in the aforementioned works~\cite{TZ2007,Beckner2008,GM2011,Cazacu2019,HT2021}. There, according to the relation between the relevant parameters, namely the dimension $d,$ the order of the weight-power $\alpha$ and the non-negative integer $k,$ a more explicit description has been provided in different cases.
Here we will describe the behavior of the constant only in the weight-free case, namely the original case $\alpha=0,$ and we show that $C(d, 0)$ in~\eqref{eq:weighted-Hardy-Rellich} coincides with the best constant $C(d)$ in~\eqref{eq:HardyRellich-classical}. Nevertheless we stress that in the case $\alpha\neq 0$ we recover the previous available results in~\cite{GM2011,TZ2007,HT2021}.
First of all, if $\alpha=0$ then one immediately has from the second expression in~\eqref{eq:constant-classical} that $C(4,0)=3.$
When $\alpha=0$ and $d\neq 4$ one needs to study the first in~\eqref{eq:constant-classical}. Plugging $\alpha=0,$ the first expression in~\eqref{eq:constant-classical}  becomes
\begin{equation}\label{eq:constant-classical-zero}
	C(d,0)=\min_{k\in \N_0} \frac{\left(4  c_k + d(d-4)\right)^2}{4(4c_k + (d-4)^2)}, 
	\qquad c_k=k(k+d-2).
\end{equation}
Studying the minimum $x_0$ of the function $\frac{(4x+d(d-4))^2}{4(4x+ (d-4)^2)}$ for $x\geq 0,$ one sees that $c_0\leq x_0\leq c_1,$ equivalently $0\leq x_0\leq d-1$ (see definition of $c_k$ in~\eqref{eq:constant-classical-zero}). Thus the value of $C(d,0)$ in~\eqref{eq:constant-classical-zero} depends only on $k=0$ and $k=1.$ One can easily check that for $d\geq 5$ the minimum is obtained for $k=0,$ yielding $C(d,0)=\frac{d^2}{4}.$ Instead in lower dimensions, namely $d\in\{2,3\},$ then this is achieved for $k=1.$ This gives $C(2,0)=0,$ $C(3,0)=\tfrac{25}{36}.$ Thus $C(d,0)$ in~\eqref{eq:weighted-Hardy-Rellich} equals $C(d)$  in~\eqref{eq:HardyRellich-classical} as claimed. We stress that in the two-dimensional setting $d=2$ no non-trivial inequalities are available, indeed $C(2,0)=0,$ unless one restricts the domain of validity of inequality~\eqref{eq:weighted-Hardy-Rellich} to functions $\psi\in C^\infty_0(\R^2\setminus \{0\})$ which satisfy~\eqref{eq:orth-cond}. In this case then the minimum in~\eqref{eq:constant-classical-zero} is taken over $\N_0\setminus \{1\}$ and thus, due to the reasoning above, it is achieved for $k=0$ giving $C(2,0)=1.$ This means that in $d=2$ a non-trivial Hardy-Rellich inequality holds true if one restricts to a smaller set of function. We stress that as far as we know this simple two dimensional property was not observed before elsewhere.
\end{remark}
		
\medskip
\noindent
A further example is given by magnetic monopoles in $\R^3.$ This model has been intensively studied in the last decades (see~\cite{CT2010}). More recently, Frank and Loss~\cite{FL20} considered it as an example of (non-standard) magnetic field that supports zero mode for the three dimensional Dirac equation.  
For a magnetic monopole at the origin, the vector field $A$ takes the form  
\begin{equation}\label{eq:A-monopole}
	A(x,y,z)=g \frac{(-y,x,0)}{r(r+z)}, \qquad r=\sqrt{x^2+y^2+z^2},
	\qquad (x,y,z)\in \R^3\setminus\{(0,0,z)\mid z\leq0\}
\end{equation}
with a parameter $g$ representing the monopole strength. The corresponding magnetic field is given by
\begin{equation*}
	B(x,y,z)=\curl A= g \frac{(x,y,z)}{r^3}.
\end{equation*}
As a consequence of Theorem~\ref{thm:main-general}, the following result holds.
\begin{theorem}\label{thm:monopole}
	Let $d=3$ and assume $g\geq 1/2$ in~\eqref{eq:A-monopole}. Then for all $\psi \in C^\infty_0(\R^3\setminus \{0\}),$
	\begin{equation*}
		\int_{\R^3} \frac{|\Delta_A \psi(x)|^2}{|x|^\alpha}\, dx 
		\geq C_\textup{mon}(\alpha)\int_{\R^3} \frac{\mathcal{D} (\psi)(x)}{|x|^{\alpha+2}}\, dx. 
	\end{equation*}
	The constant $C_\textup{mon}(\alpha)$ is given by
	\begin{equation*}
		C_\textup{mon}(\alpha)=\min_{\substack{k=2(|g|+l),\\ 
		l\in \N_0}} 
		\frac{(k(k+2)-4g^2-(\alpha+1)(\alpha+ 3))^2}{4(k(k+2)-4g^2 +(\alpha+1))}.
	\end{equation*}
\end{theorem}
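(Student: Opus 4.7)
The plan is to invoke Theorem~\ref{thm:main-general} with $\mathcal{L}=-\Delta_{A}$, $d=3$, and a suitable angular operator $\Lambda_\omega$. First I would verify that the Dirac--monopole potential~\eqref{eq:A-monopole} is transversal ($x\cdot A\equiv 0$ by a direct computation) and scales as $A(x)=|x|^{-1}\mathbf{A}(x/|x|)$; in spherical coordinates one in fact obtains
\[
|x|\,A(x)\;=\;g\,\tan(\theta/2)\,(-\sin\varphi,\cos\varphi,0),
\]
with $\mathbf{A}\colon\mathbb{S}^{2}\setminus\{\text{south pole}\}\to\R^{3}$ tangential. As in the discussion preceding~\eqref{eq:LW}, the magnetic Laplacian then splits in polar coordinates as
\[
-\Delta_{A}\;=\;L_{r}+\tfrac{1}{r^{2}}\Lambda_\omega,\qquad \Lambda_\omega:=(-i\nabla_{\mathbb{S}^{2}}+\mathbf{A})^{2},
\]
placing us in the framework of Theorem~\ref{thm:main-general} as soon as the spectrum of $\Lambda_\omega$ is under control.

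The core step is therefore the diagonalisation of $\Lambda_\omega$, which is the magnetic Laplacian on $\mathbb{S}^2$ generated by a Dirac monopole of strength $g$. Its spectrum is classical and is encoded by the Wu--Yang monopole harmonics (the $\mathfrak{su}(2)$-representation theoretic viewpoint is the one exploited in~\cite{FL20}): $\Lambda_\omega$ is purely discrete with isolated eigenvalues
\[
\lambda_{j}\;=\;j(j+1)-g^{2},\qquad j\in\{|g|+\ell\colon\ell\in\N_{0}\},
\]
of multiplicity $2j+1$. Since $g\geq 1/2$, one has $\lambda_{j}\geq\lambda_{|g|}=|g|\geq 1/2>0$ for every admissible $j$, so $\Lambda_\omega$ is non-negative and all the hypotheses of Theorem~\ref{thm:main-general} are met.

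It then remains to unpack the constant $C(d,\alpha)$ of~\eqref{eq:C(d,alpha)} with $d=3$. Reparametrising via $k:=2j=2(|g|+\ell)$, $\ell\in\N_{0}$, one has $4\lambda_{j}=k(k+2)-4g^{2}$; plugging this in, together with the algebraic identities $(d+\alpha)(d-\alpha-4)=-(\alpha+1)(\alpha+3)$ and $(d-\alpha-4)^{2}=(\alpha+1)^{2}$, yields the announced expression for $C_\textup{mon}(\alpha)$ after elementary algebra. The strict positivity of every $\lambda_{j}$ also causes the two cases of~\eqref{eq:C(d,alpha)} to collapse into a single uniform formula.

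The main obstacle is the spectral computation for $\Lambda_\omega$: the result is classical, but a self-contained treatment requires dealing with the global Dirac-string singularity of $\mathbf{A}$ and diagonalising the operator explicitly, either by expanding into monopole harmonics $Y_{g,j,m}$ on a smooth local gauge patch (say, the upper hemisphere) or by exploiting the commutation relations with the conserved total angular momentum operators. Once this spectral input is in place, everything else is a direct specialisation of Theorem~\ref{thm:main-general}.
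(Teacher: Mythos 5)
Your proposal is correct and follows essentially the same route as the paper: both reduce the statement to Theorem~\ref{thm:main-general} by identifying the angular part of $-\Delta_A$ with the spherical monopole Laplacian and feeding in its known spectrum $\lambda=\tfrac14 k(k+2)-g^2$, $k=2(|g|+l)$ (the paper simply cites~\cite{CT2010} for this, where you invoke the Wu--Yang monopole harmonics), and your extra observations --- that $g\geq 1/2$ forces every eigenvalue to be at least $|g|>0$, so there are no zero modes and the two branches of~\eqref{eq:C(d,alpha)} merge into one formula --- are exactly the points the paper leaves implicit. One remark: your specialisation correctly produces $(d-\alpha-4)^2=(\alpha+1)^2$ in the denominator, i.e. $4\bigl(k(k+2)-4g^2+(\alpha+1)^2\bigr)$, so the exponent missing from $(\alpha+1)$ in the stated $C_{\textup{mon}}(\alpha)$ is a typo in the theorem, not a flaw in your computation.
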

\begin{remark}
We could not find in the literature Hardy and Rellich inequalities involving magnetic monopoles. Nevertheless, the same approach (even simplified) we use to prove Hardy-Rellich inequalities for this model, namely Theorem~\ref{thm:monopole}, can be adopted to establish improvements of these more classical inequalities. 
\end{remark}

\noindent
Using a more direct strategy than the one used to prove Theorem~\ref{thm:main-general}, the following weighted Hardy-type inequalities for the first order operator $\mathcal{D}$ associated to $\mathcal{L}=L_r + \frac{1}{r^2}\Lambda_\omega$ are easily obtained.
\begin{theorem}\label{thm:Hardy-cdc}
Assume that the hypotheses of Theorem~\ref{thm:main-general} are satisfied. Let $\beta \in \R.$ Then for all $\psi \in \Dom(\mathcal{L})$ such that $|\cdot|^{-\beta/2}\mathcal{D}(\psi)^{1/2}\in L^2(\R^d)$ we have
\begin{equation}\label{eq:Hardy-cdc}
\int_{\R^d} \frac{\mathcal{D}(\psi)(x)}{|x|^{\beta}}\, dx
\geq C_{\mathcal{D}}(d,\beta)
\int_{\R^d} \frac{|\psi(x)|^2}{|x|^{\beta+2}}\, dx,
\end{equation}
where $C_\mathcal{D}(d,\beta)$ is given by
\begin{equation}\label{eq:CGamma}
C_\mathcal{D}(d, \beta)=\min_{m\in \mathcal{I}}\Bigg\{\lambda_m + \Big(\frac{d-\beta-2}{2}\Big)^2 \Bigg\}.
\end{equation}
\end{theorem}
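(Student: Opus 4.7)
The natural strategy is to decompose $\psi$ in the orthonormal basis of eigenfunctions $\{u_m\}_{m\in\mathcal{I}}$ of the angular operator $\Lambda_\omega$, thus reducing the inequality to a one-parameter family of one-dimensional weighted Hardy inequalities, one for each angular mode. Writing $\psi(r,\omega)=\sum_{m\in\mathcal{I}}\phi_m(r)u_m(\omega)$ with $\phi_m(r):=\int_{\mathbb{S}^{d-1}}\psi(r,\omega)\overline{u_m(\omega)}\,d\omega$, the hypothesis $\psi\in C^\infty_0(\R^d\setminus\{0\})$ implies $\phi_m\in C^\infty_0((0,\infty))$ for every $m$, and the series converges in $L^2(\mathbb{S}^{d-1};d\omega)$ for each $r>0$.

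Using Parseval's identity on $L^2(\mathbb{S}^{d-1})$ together with the functional-calculus identity $\Lambda_\omega^{1/2}u_m=\sqrt{\lambda_m}\,u_m$, one checks that for every $r>0$
\begin{equation*}
\int_{\mathbb{S}^{d-1}}|\partial_r\psi(r,\omega)|^2\,d\omega=\sum_{m\in\mathcal{I}}|\phi_m'(r)|^2,\qquad \int_{\mathbb{S}^{d-1}}|\Lambda_\omega^{1/2}\psi(r,\omega)|^2\,d\omega=\sum_{m\in\mathcal{I}}\lambda_m|\phi_m(r)|^2.
\end{equation*}
Passing to spherical coordinates, both sides of~\eqref{eq:Hardy-cdc} split mode-wise as
\begin{equation*}
\int_{\R^d}\frac{\mathcal{D}(\psi)}{|x|^\beta}\,dx=\sum_{m\in\mathcal{I}}\int_0^\infty\Big(|\phi_m'(r)|^2+\tfrac{\lambda_m}{r^2}|\phi_m(r)|^2\Big)r^{d-1-\beta}\,dr,\quad \int_{\R^d}\frac{|\psi|^2}{|x|^{\beta+2}}\,dx=\sum_{m\in\mathcal{I}}\int_0^\infty|\phi_m(r)|^2\,r^{d-3-\beta}\,dr.
\end{equation*}

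The key ingredient is then the sharp one-dimensional weighted Hardy inequality on $(0,\infty)$: for every $\phi\in C^\infty_0((0,\infty))$,
\begin{equation*}
\int_0^\infty|\phi'(r)|^2\,r^{d-1-\beta}\,dr\;\geq\;\Big(\tfrac{d-\beta-2}{2}\Big)^2\int_0^\infty|\phi(r)|^2\,r^{d-3-\beta}\,dr,
\end{equation*}
which follows in one line by integrating $\tfrac{d}{dr}(|\phi|^2 r^{d-\beta-2})$ over $(0,\infty)$ and applying Cauchy--Schwarz. Adding the non-negative zero-order contribution $\lambda_m\int_0^\infty|\phi_m(r)|^2 r^{d-3-\beta}\,dr$ to each side yields, on each mode, the bound with coefficient $\lambda_m+((d-\beta-2)/2)^2$. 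Estimating this coefficient from below by its infimum over $m\in\mathcal{I}$ (which is a minimum, because the eigenvalues $\lambda_m$ accumulate only at infinity) and summing over $m$ produces exactly the constant $C_\mathcal{D}(d,\beta)$ in~\eqref{eq:CGamma}.

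The only (mild) technical point is the rigorous justification of the Parseval identities and of the interchange between the radial integral and the angular sum. Discreteness of the spectrum of $\Lambda_\omega$ together with the spectral theorem guarantee that $\{u_m\}_{m\in\mathcal{I}}$ is a complete orthonormal system of $L^2(\mathbb{S}^{d-1};d\omega)$, while the compact support of $\psi$ away from the origin allows a clean application of Fubini and monotone convergence to every non-negative series in play; no genuine analytic difficulty remains.
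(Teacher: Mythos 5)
Your proposal is correct and follows essentially the same route as the paper: decompose $\psi$ in the eigenbasis of $\Lambda_\omega$ (the paper's Lemma~\ref{lemma:1d-reduction}, identity~\eqref{eq:first-order} with $\alpha+2$ replaced by $\beta$), apply the sharp 1D weighted Hardy inequality~\eqref{eq:1d-Hardy} to each radial mode, and take the minimum over $m$ of $\lambda_m+\big(\tfrac{d-\beta-2}{2}\big)^2$ before resumming via Parseval. No gaps.
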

\begin{remark}
Notice that~\eqref{eq:Hardy-cdc} has as particular cases the classical weighted Hardy inequalities with optimal constants (just take $\mathcal{L}=-\Delta$ and notice that in this case $\mathcal{D}(\psi)=|\nabla \psi|^2$) and the optimal magnetic Hardy inequalities for Aharonov-Bohm magnetic fields (take $\mathcal{L}:=-\Delta_A,$ with $A$ as in~\eqref{eq:AB-gen} and use that $\int_{\R^d} \mathcal{D}(\psi)/|x|^\beta=\int_{\R^d} |\nabla_A \psi|^2/|x|^\beta$ (see also Remark~\ref{rmk:mgrad-cdc}))
\end{remark}
\begin{remark}
Combining the Hardy inequality~\eqref{eq:Hardy-cdc} in Theorem~\ref{thm:Hardy-cdc} and the Hardy-Rellich inequality~\eqref{eq:main} in Theorem~\ref{thm:main-general} one gets easily the following weighted Rellich inequalities in the spirit of Evans and Lewis~\cite{EL2005}:
\begin{equation}\label{eq:Rellich-cdc}
	\int_{\R^d} \frac{|\mathcal{L}\psi(x)|^2}{|x|^\alpha}\, dx\geq \widetilde{C}(d,\alpha) \int_{\R^d} \frac{|\psi(x)|^2}{|x|^{\alpha + 4}}\, dx,
\end{equation}
where $\widetilde{C}(d,\alpha)=C(d,\alpha) C_\mathcal{D}(d, \alpha+2),$ where $C(d,\alpha)$ is as in~\eqref{eq:C(d,alpha)} and $C_\mathcal{D}(d, \alpha+2)$ is as in~\eqref{eq:CGamma}. In general, it is not easy to see whether $\widetilde{C}(d,\alpha)$ equals the optimal constant in~\cite{EL2005}, nevertheless in the specific case of the Laplacian, namely for $\mathcal{L}=-\Delta,$ and for $\alpha=0,$ one checks that $\widetilde{C}(d,0)=d^2(d-4)^2/16$ (see Remark~\ref{rmk:grad}). In other words, inequality~\eqref{eq:Rellich-cdc} coincides with the classical Rellich inequality with optimal constant.  
\end{remark}
\medskip
\noindent
The paper is organized as follows: we give the proof of the main result Theorem~\ref{thm:main-general} and of Theorem~\ref{thm:Hardy-cdc}
%and its one dimensional counterpart Theorem~\ref{thm:1D} 
in the next Section~\ref{sec:main}. 
The optimality, as stated in Theorem~\ref{thm:minimizing}, is shown in Section~\ref{sec:optimality}. In Section~\ref{sec:consequences} we show how to get Theorem~\ref{thm:general-electric}, Theorem~\ref{thm:HB-anydimension} and Theorem~\ref{thm:monopole} from the general result Theorem~\ref{thm:main-general}. 
%Some additional material for the reader is contained in the Appendix~\ref{ap:AB}.

%%---------------------------%
\subsection*{Acknowledgments}
%%---------------------------%
The idea of this project came out during the CIRM conference on \emph{``Mathematical aspects of the physics with non-self-adjoint operators: 10 years after"} held in Marseille in February 2021. B.C. and L.C. would like to express their gratitude to the organizers of the conference L. Boulton, D. Krej\v ci\v r\'ik and P. Siegl for the chance of being part of this stimulating event.
B. C. is member of GNAMPA (INDAM) and he is supported by Fondo Sociale Europeo – Programma Operativo Nazionale Ricerca e Innovazione 2014-2020, progetto PON: progetto AIM1892920-attivit\`a 2, linea 2.1.
The research of L.C. is supported by the Deutsche Forschungsgemeinschaft (DFG) through CRC 1173. The authors are very grateful to the anonymous referee for the comments/suggestions on the preliminary version of this paper which highly improve the quality of the manuscript.

\section{Proof of Theorem~\ref{thm:main-general} 
and Theorem~\ref{thm:Hardy-cdc}
}\label{sec:main}
We start with the proof of Theorem~\ref{thm:main-general}. Inspired by the arguments in \cite{Cazacu2019}, we introduce
a suitable orthonormal basis decomposition of the functions in the domain $\Dom(\mathcal{L})$ (see~\eqref{eq:domain}) of the operator $\mathcal{L},$ which is reminiscent of the classical spherical harmonics decomposition used on the case of the Laplacian: since the spectrum of $\Lambda_\omega$ is assumed to be discrete, its normalized eigenvectors $u_m,$ $m\in \mathcal{I}$ (with eigenvalues $\{\lambda_m\}_{m\in \mathcal{I}}$ repeated according to multiplicity) form an orthonormal basis of $L^2(\mathbb{S}^{d-1};d\omega).$ Thus one can expand any $\psi\in \Dom(\mathcal{L})$ as
	\begin{equation}\label{eq:decomposition}
		\psi(x)=\psi(r,\omega)=\sum_{m\in \mathcal{I}}  f_m(r) u_m(\omega), 
	\end{equation} 
	where the coefficients $f_m\in C^\infty_0(\R^+)$ are computed by projecting $\psi$ onto each basis eigenfunction $u_m,$ $m\in \mathcal{I},$ \emph{i.e.}
	\begin{equation}\label{eq:coefficients}
		f_m(r):=\int_{\mathbb{S}^{d-1}}\psi(r,\omega)\overline{u_m(\omega)}\, d\omega.
	\end{equation}
The decomposition in~\eqref{eq:decomposition} reduces matters to a 1D-problem: indeed, the following lemma holds.

\begin{lemma}\label{lemma:1d-reduction}
Let $\psi\in \Dom(\mathcal{L}).$ Then the following identities hold true
\begin{align}
	\label{eq:second-order}
	&
	\begin{multlined}
	\int_{\R^d} \frac{|\mathcal{L}\psi(x)|^2}{|x|^\alpha}\, dx=
	\sum_{m\in \mathcal{I}}
	\left \{\int_0^\infty |f_m''(r)|^2r^{d-\alpha-1}\, dr 
	+ [(d-1)(\alpha +1) + 2\lambda_m]\int_0^\infty |f_m'(r)|^2 r^{d-\alpha-3} \,dr \right.\\
	\left.+ \lambda_m [(\alpha + 2)(d-\alpha -4) + \lambda_m] \int_0^\infty |f_m(r)|^2 r^{d-\alpha-5} \,dr
	\right \},
	\end{multlined}
	\\
	\label{eq:first-order}
	&\int_{\R^d} \frac{\mathcal{D}(\psi)(x)}{|x|^{\alpha+2}}\, dx=
	\sum_{m\in \mathcal{I}}
	\left \{\int_0^\infty |f_m'(r)|^2r^{d-\alpha-3}\, dr 
	+ \lambda_m \int_0^\infty |f_m(r)|^2 r^{d-\alpha-5} \,dr
	\right \}.	
\end{align} 
Here $\mathcal{D}$ is the first order operator defined as $\mathcal{D}(\psi)=|\partial_r \psi|^2 + \frac{1}{r^2}|\Lambda_\omega^{1/2}\psi|^2$  and $f_m(r),$ $m\in \mathcal{I}$ are the coefficients introduced in~\eqref{eq:coefficients}.
\end{lemma}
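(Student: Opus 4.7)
The plan is to substitute the orthonormal expansion $\psi(r,\omega) = \sum_{m\in \mathcal{I}} f_m(r) u_m(\omega)$ into both sides of the stated identities and exploit (i) the orthonormality of $\{u_m\}_{m\in\mathcal{I}}$ in $L^2(\mathbb{S}^{d-1};d\omega)$ and (ii) the eigenvalue relations $\Lambda_\omega u_m = \lambda_m u_m$ and $\Lambda_\omega^{1/2} u_m = \sqrt{\lambda_m}\, u_m$ (functional calculus). Since $\psi\in C^\infty_0(\R^d\setminus\{0\})$, each $f_m \in C^\infty_0((0,\infty))$, so there will be no boundary terms in the integrations by parts. Separation of variables then reduces the two multidimensional integrals to sums of purely one-dimensional radial integrals, which is the shape of the claim.

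Identity~\eqref{eq:first-order} is the direct one. From $\partial_r \psi = \sum_m f_m'(r)\, u_m(\omega)$ and $\Lambda_\omega^{1/2} \psi = \sum_m \sqrt{\lambda_m}\, f_m(r)\, u_m(\omega)$, Parseval on the sphere at each fixed $r$ gives
\begin{equation*}
\int_{\mathbb{S}^{d-1}} \mathcal{D}(\psi)(r,\omega)\, d\omega = \sum_{m\in\mathcal{I}} \Big(|f_m'(r)|^2 + \frac{\lambda_m}{r^2}|f_m(r)|^2\Big),
\end{equation*}
and integrating in $r$ with the Jacobian $r^{d-1}$ against the weight $r^{-\alpha-2}$ produces~\eqref{eq:first-order}.

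For~\eqref{eq:second-order}, apply $\mathcal{L} = L_r + r^{-2}\Lambda_\omega$ termwise:
\begin{equation*}
\mathcal{L}\psi = \sum_{m\in \mathcal{I}} F_m(r)\, u_m(\omega),\qquad F_m(r) := -f_m''(r) - \tfrac{d-1}{r} f_m'(r) + \tfrac{\lambda_m}{r^2} f_m(r),
\end{equation*}
so that Parseval reduces the left-hand side of~\eqref{eq:second-order} to $\sum_m \int_0^\infty |F_m(r)|^2 r^{d-1-\alpha}\, dr$. Expanding $|F_m|^2$ produces three ``diagonal'' contributions ($|f_m''|^2 r^{d-1-\alpha}$, $(d-1)^2 |f_m'|^2 r^{d-3-\alpha}$, and $\lambda_m^2 |f_m|^2 r^{d-5-\alpha}$) together with three cross terms proportional to $\Re(f_m''\overline{f_m'})$, $\Re(f_m''\overline{f_m})$ and $\Re(f_m'\overline{f_m})$. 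The key identities
\begin{equation*}
2\Re(f_m''\overline{f_m'}) = \partial_r |f_m'|^2,\qquad 2\Re(f_m'\overline{f_m}) = \partial_r |f_m|^2,\qquad 2\Re(f_m''\overline{f_m}) = \partial_r^2 |f_m|^2 - 2|f_m'|^2,
\end{equation*}
turn each cross term into a total derivative (or second derivative) of $|f_m'|^2$ or $|f_m|^2$; integrating by parts once (twice for the $\partial_r^2$ term) against $r^{d-1-\alpha}$ and its shifts expresses them as multiples of $|f_m'|^2 r^{d-3-\alpha}$ and $|f_m|^2 r^{d-5-\alpha}$, with no boundary contribution. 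Collecting the resulting coefficients and simplifying must yield exactly $(d-1)(\alpha+1) + 2\lambda_m$ in front of $|f_m'|^2 r^{d-3-\alpha}$ and $\lambda_m\big[(\alpha+2)(d-\alpha-4)+\lambda_m\big]$ in front of $|f_m|^2 r^{d-5-\alpha}$, which is~\eqref{eq:second-order}.

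The only difficulty is bookkeeping: tracking signs and powers of $r$ through the three integrations by parts and verifying that the arithmetic collapses to the two compact expressions displayed in the lemma. Conceptually the argument is a straightforward diagonalization of $\mathcal{L}$ in the angular sector by the spectral theorem for $\Lambda_\omega$, combined with elementary radial integration by parts.
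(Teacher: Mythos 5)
Your proposal is correct and follows essentially the same route as the paper: expand $\psi$ in the eigenbasis of $\Lambda_\omega$, apply Parseval on the sphere, and reduce the cross terms by radial integration by parts with no boundary contributions (the paper merely performs the split of $|\mathcal{L}\psi|^2$ into radial, angular and mixed parts before Parseval rather than after, which is the same computation in a different order). Your stated coefficients do indeed collapse to $(d-1)(\alpha+1)+2\lambda_m$ and $\lambda_m[(\alpha+2)(d-\alpha-4)+\lambda_m]$, so the bookkeeping checks out.
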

\begin{proof}[Proof of Lemma~\ref{lemma:1d-reduction}]
	Even though identity~\eqref{eq:second-order} can be already found in\cite{EL2005}, for the reader's convenience in the following we prove both~\eqref{eq:second-order} and~\eqref{eq:first-order}.
	Since $\mathcal{L}=L_r + \frac{1}{r^2}\Lambda_\omega$ one easily has 
	\begin{equation}\label{eq:pre-sec}
		\int_{\R^d} \frac{|\mathcal{L}\psi(x)|^2}{|x|^\alpha}\, dx
		=\int_{\R^d} \frac{|L_r\psi(x)|^2}{|x|^\alpha}\, dx + \int_{\R^d} \frac{|\Lambda_\omega\psi(x)|^2}{|x|^{\alpha+4}}\, dx + 2 \Re \int_{\R^d} \frac{L_r \psi(x) \overline{\Lambda_\omega \psi(x)}}{|x|^{\alpha + 2}}.
	\end{equation}	
	Let us consider the right hand side of~\eqref{eq:pre-sec}. From the decomposition~\eqref{eq:decomposition} one has
	\begin{equation*}
		L_r\psi(x)= \sum_{m\in \mathcal{I}} L_r f_m(r) u_m(\omega).
	\end{equation*}	
	Using this fact and the Parseval's identity we obtain
	\begin{equation}\label{eq:preliminary}
		\int_{\R^d} \frac{|L_r \psi(x)|^2}{|x|^\alpha}\, dx
		=\int_0^\infty \int_{\mathbb{S}^{d-1}} \Big |\sum_{m\in \mathcal{I}} L_rf_m(r)u_m(\omega)\Big |^2 r^{d-\alpha-1}\, dr\, d\omega
		=\sum_{m\in \mathcal{I}}\int_0^\infty |L_rf_m(r)|^2 r^{d-\alpha-1}\, dr.
	\end{equation}
	Let us consider $\int_0^\infty |L_rf_m(r)|^2 r^{d-\alpha-1}\, dr.$ Using the explicit form of $L_r,$ namely $L_r=-\partial_{rr} -\tfrac{d-1}{r}\partial_r,$ and integrating by parts, we obtain 
	\begin{equation*}
		\begin{split}
			\int_0^\infty &|L_rf_m(r)|^2 r^{d-\alpha-1}\, dr\\
			&=\int_0^\infty |f_m''(r)|^2r^{d-\alpha-1}\, dr
			+ (d-1)^2\int_0^\infty |f_m'(r)|^2 r^{d-\alpha-3}\, dr
			+ 2(d-1)\Re \int_0^\infty f_m''(r) \overline{f_m'(r)} r^{d-\alpha-2}\, dr\\
			&=\int_0^\infty |f_m''(r)|^2r^{d-\alpha-1}\, dr
			+ (d-1)^2\int_0^\infty |f_m'(r)|^2 r^{d-\alpha-3}\, dr
			- (d-1)(d-\alpha-2)\int_0^\infty |f_m'(r)|^2 r^{d-\alpha-3}\, dr.
		\end{split}
	\end{equation*}
	Plugging the last identity in~\eqref{eq:preliminary} gives
	\begin{multline}\label{eq:last-r}
			\int_{\R^d} \frac{|L_r \psi(x)|^2}{|x|^\alpha}\, dx
			=\sum_{m\in \mathcal{I}} 
			\left\{ 
				\int_0^\infty |f_m''(r)|^2r^{d-\alpha-1}\, dr
			+ (d-1)^2\int_0^\infty |f_m'(r)|^2 r^{d-\alpha-3}\, dr \right.\\
			\left.- (d-1)(d-\alpha-2)\int_0^\infty |f_m'(r)|^2 r^{d-\alpha-3}\, dr			
			 \right\}.
	\end{multline}
	Again using the decomposition~\eqref{eq:decomposition} one has
	\begin{equation*}
		\Lambda_\omega \psi(x)
		=\sum_{m\in \mathcal{I}} f_m(r) \Lambda_\omega u_m(\omega) 
		=\sum_{m\in \mathcal{I}} \lambda_m f_m(r) u_m(\omega),
	\end{equation*}
	where in the second equality we have used that $\{u_m\}_{m\in \mathcal{I}}$ are eigenfunctions of the operator $\Lambda_\omega$ with corresponding eigenvalues $\lambda_m,$ $m\in \mathcal{I}.$ Using again Parceval's identity one gets
	\begin{equation}\label{eq:last-angular}
		\int_{\R^d} \frac{|\Lambda_\omega \psi(x)|^2}{|x|^{\alpha + 4}}\, dx
		=\int_0^\infty \int_{\mathbb{S}^{d-1}} |\sum_{m\in \mathcal{I}} \lambda_m f_m(r)u_m(\omega)|^2 r^{d-\alpha-5}\, dr\, d\omega
		=\sum_{m\in \mathcal{I}} \lambda_m^2 \int_0^\infty |f_m(r)|^2 r^{d-\alpha-5}\, dr.
	\end{equation}
	Similarly as above, it is easy to check that the following identity holds:
	\begin{equation}\label{eq:last-mixed}
		2\Re \int_{\R^d} \frac{L_r \psi(x) \overline{\Lambda_\omega \psi(x)}}{|x|^{\alpha + 2}}\, dx
		=
		\sum_{m\in \mathcal{I}}\lambda_m \left\{
		2 \int_0^\infty |f_m'(r)|^2 r^{d-\alpha-3}\, dr
		+ (\alpha+ 2)(d-\alpha-4) \int_0^\infty |f_m(r)|^2 r^{d-\alpha-5}\, dr
		 \right\}.
	\end{equation}
	Plugging~\eqref{eq:last-r},~\eqref{eq:last-angular} and~\eqref{eq:last-mixed} in~\eqref{eq:pre-sec} gives~\eqref{eq:second-order}. 

Now, from the definition of $\mathcal{D}$ one has
\begin{equation}
	\label{eq:pre-first}
	\int_{\R^d} \frac{\mathcal{D}(\psi)(x)}{|x|^{\alpha+2}}\, dx=
		\int_{\R^d} \frac{|\partial_r \psi(x)|^2}{|x|^{\alpha+ 2}}\, dx + \int_{\R^d} \frac{|\Lambda_\omega^{1/2} \psi(x)|^2}{|x|^{\alpha + 4}}\, dx.
\end{equation}
Similarly as above, one checks that the following identities hold true:
\begin{equation}\label{eq:last-first2}
		\int_{\R^d} \frac{|\partial_r \psi(x)|^2}{|x|^{\alpha+ 2}}\, dx
		= \sum_{m\in \mathcal{I}} \int_0^\infty |f_m'(r)|^2r^{d-\alpha-3}\, dr,
		\quad \text{and} \quad
		\int_{\R^d} \frac{|\Lambda_\omega^{1/2} \psi(x)|^2}{|x|^{\alpha + 4}}\, dx=
		\sum_{m\in \mathcal{I}} \lambda_m \int_0^\infty |f_m(r)|^2r^{d-\alpha-5}\, dr.
	\end{equation}
Eventually, plugging	~\eqref{eq:last-first2} in~\eqref{eq:pre-first} gives~\eqref{eq:first-order} and, thus, the thesis.
\end{proof}   
\noindent
We are now ready to prove Theorem~\ref{thm:main-general}. 
\begin{proof}[Proof of Theorem~\ref{thm:main-general}]
	The proof is based on the strategy introduced by Cazacu in~\cite{Cazacu2019}. Let us first split~\eqref{eq:second-order} as follows:
	\begin{equation}\label{eq:I+II} 
			\int_{\R^d} \frac{|\mathcal{L}\psi(x)|^2}{|x|^{\alpha}}\, dx= I+ II,
	\end{equation}	
	where
	\begin{multline*}
	I:=
	\sum_{\substack{m\in \mathcal{I}\\\lambda_m\neq 0}} 
			\left \{ 
			\int_0^\infty |f_m''(r)|^2 r^{d-\alpha-1}\, dr
			+ [(d-1)(\alpha+1)]\int_0^\infty |f_m'(r)|^2 r^{d-\alpha-3}\, dr \right\}\\
			+ \sum_{\substack{m\in \mathcal{I}\\\lambda_m\neq 0}} \lambda_m
			\left \{  
			2\int_0^\infty |f_m'(r)|^2r^{d-\alpha-3}\, dr + [(\alpha+2)(d-\alpha-4) +\lambda_m]\int_0^\infty |f_m(r)|^2\, r^{d-\alpha-5}\, dr
			\right \}
	\end{multline*}
	and
	\begin{equation*}
	II:=\sum_{\substack{m\in \mathcal{I}\\\lambda_m=0}}
			\left\{
			\int_0^\infty |f_m''(r)|^2 r^{d-\alpha-1}
			+ [(d-1)(\alpha +1)]\int_0^\infty |f_m'(r)|^2 r^{d-\alpha-3}\, dr		
			\right\}.
	\end{equation*}
	We estimate $II$ first. 
	Using the 1D weighted Hardy inequality
	\begin{equation}\label{eq:1d-Hardy}
	\int_0^\infty |f'(r)|^{2} r^{t+2}\, dr\geq \left(\frac{t+1}{2} \right)^2 \int_0^\infty |f(r)|^2 r^t\, dr, \qquad t\in \R,
\end{equation}
which is valid for any distribution $f$ on $(0,\infty)$ such that the integral on the left hand side of~\eqref{eq:1d-Hardy} is finite
(see \emph{e.g.}~\cite[Prop.2.4]{CP2018}), we have
	\begin{equation}\label{eq:II}
		\begin{split}
			II&\geq \sum_{\substack{m\in \mathcal{I}\\\lambda_m=0}}
			\frac{(d+\alpha)^2}{4}\int_0^\infty |f_m'(r)|^2 r^{d-\alpha-3}\, dr\\
			&=\sum_{\substack{m\in \mathcal{I}\\\lambda_m=0}}
			\frac{(d+\alpha)^2}{4}
			\left[ \int_0^\infty |f_m'(r)|^2 r^{d-\alpha-3}\, dr
			+ \lambda_m \int_0^\infty |f_m(r)|^2 r^{d-\alpha-5}\, dr	
			\right].
		\end{split}
	\end{equation}
	Let $\varepsilon \in \R$ to be fixed later (in particular, the forthcoming choice of $\varepsilon$ will satisfy $\varepsilon/\lambda_m+2\geq0$). We split $I$ as $I=I_{1, \varepsilon}$ + $I_{2,\varepsilon},$ where
	\begin{equation*}
		I_{1,\varepsilon}:=\sum_{\substack{m\in \mathcal{I}\\\lambda_m\neq 0}}
		\left\{
		\int_0^\infty |f_m''(r)|^2 r^{d-\alpha-1}\, dr + [(d-1)(\alpha + 1) -\varepsilon] \int_0^\infty |f_m'(r)|^2 r^{d-\alpha-3}\, dr
		\right\},
	\end{equation*}
	and 
	\begin{equation*}
	I_{2,\varepsilon}=\sum_{\substack{m\in \mathcal{I}\\ \lambda_m\neq 0}} \lambda_m
	\left[
 \left(\frac{\varepsilon}{\lambda_m} + 2 \right) 
 \int_0^\infty |f_m'(r)|^2 r^{d-\alpha-3}\, dr
 + [(\alpha+2)(d-\alpha-4) + \lambda_m] \int_0^\infty |f_m(r)|^2 r^{d-\alpha-5}\, dr	
 \right].
 \footnote{As we will see below, our choice of the parameter $\varepsilon$ will depend on $m,$ therefore the notation $I_{1,\varepsilon}$ and $I_{1,\varepsilon}$ used for these two sums is not entirely correct. A better choice would have been to consider the splitting $I=\sum_{\substack{m\in \mathcal{I}\\ \lambda_m\neq 0}}(I_{1, \varepsilon_m} + I_{2, \varepsilon_m}),$ with $I_{1,\varepsilon_m}$ and $I_{2,\varepsilon_m}$ being the terms inside the sums over $m.$ Anyway we decided to avoid it not to weight down the notation.}
	\end{equation*}
	By~\eqref{eq:1d-Hardy}, we get 
	\begin{equation}\label{eq:last-first}
		I_{1,\varepsilon}\geq \sum_{\substack{m\in \mathcal{I}\\ \lambda_m\neq 0}}\left[\frac{(d+\alpha)^2}{4} - \varepsilon \right]
		\int_0^\infty |f_m'(r)|^2 r^{d-\alpha-3}\, dr,
	\end{equation}
	\begin{equation}\label{eq:last-second}
		I_{2,\varepsilon}\geq \sum_{\substack{m\in \mathcal{I}\\ \lambda_m\neq 0}} \lambda_m \left[\frac{\varepsilon}{\lambda_m}\frac{(d-\alpha-4)^2}{4} + \frac{(d-\alpha-4)(d+\alpha)}{2} + \lambda_m\right]\int_0^\infty |f_m(r)|^2r^{d-\alpha-5}\, dr.
	\end{equation}
	Let $\varepsilon>0$ be chosen such that
	\begin{equation*}
		\frac{(d+\alpha)^2}{4}-\varepsilon
		= \frac{\varepsilon}{\lambda_m}\frac{(d-\alpha-4)^2}{4} + \frac{(d-\alpha-4)(d+\alpha)}{2} + \lambda_m,
	\end{equation*}
	which yields
	\begin{equation*}
		\varepsilon(d)=\frac{\lambda_m[(d+\alpha)(-d+3\alpha+8)-4\lambda_m]}{4\lambda_m +(d-\alpha-4)^2}.
	\end{equation*}
	We stress that with this choice of $\varepsilon$ one has $\tfrac{\varepsilon}{\lambda_m} + 2\geq 0$ (this in particular justifies the possibility to apply~\eqref{eq:1d-Hardy} in $(\tfrac{\varepsilon}{\lambda_m} + 2)\int_0^\infty |f_m'(r)|^2r^{d-\alpha-3}\, dr$ above). Indeed 
	\begin{equation*}
		\frac{\varepsilon}{\lambda_m} + 2= 1 + \frac{4(\alpha+ 2)^2}{4\lambda_m + (d-\alpha-4)^2}\geq 0.
	\end{equation*}	
	In addition, one has
	\begin{equation}\label{eq:I}
		I=I_{1,\varepsilon} + I_{2,\varepsilon}
		\geq\sum_{\substack{m\in \mathcal{I}\\ \lambda_m\neq 0}}
		\frac{(4\lambda_m + (d+\alpha)(d-\alpha-4))^2}{4(4\lambda_m +(d-\alpha-4)^2)} 
		\left \{
		\int_0^\infty |f_m'(r)|^2 r^{d-\alpha-3}\, dr
		+ \lambda_m \int_0^\infty |f_m(r)|^2r^{d-\alpha-5}\, dr
		\right \}.
	\end{equation}
	Plugging estimates~\eqref{eq:I} and~\eqref{eq:II} in~\eqref{eq:I+II}
	we have
	\begin{equation}\label{eq:final}
		\begin{split}
		&\int_{\R^d} \frac{|\mathcal{L}\psi(x)|^2}{|x|^\alpha}\, dx\\		
		&\geq \min \Bigg (
		\tfrac{(d+\alpha)^2}{4};
		\min_{\substack{m\in \mathcal{I}\\ \lambda_m\neq 0}}
		\tfrac{(4\lambda_m + (d+\alpha)(d-\alpha-4))^2}{4(4\lambda_m +(d-\alpha-4)^2)}
		\Bigg)
		\sum_{m\in \mathcal{I}}
		\left \{
		\int_0^\infty |f_m'(r)|^2 r^{d-\alpha-3}\, dr
		+ \lambda_m \int_0^\infty |f_m(r)|^2r^{d-\alpha-5}\, dr \right \}
    	\\
    	&= \min \Bigg (
		\tfrac{(d+\alpha)^2}{4};
		\min_{\substack{m\in \mathcal{I}\\ \lambda_m\neq 0}}
		\tfrac{(4\lambda_m + (d+\alpha)(d-\alpha-4))^2}{4(4\lambda_m +(d-\alpha-4)^2)}
		\Bigg)
		% &=\min_{m\in \mathcal{I}} \frac{(4\lambda_m + (d+\alpha)(d-\alpha-4))^2}{4(4\lambda_m +(d-\alpha-4)^2)}
		\int_{\R^d} \frac{\mathcal{D}(\psi)(x)}{|x|^{\alpha+ 2}}\, dx, 
		\end{split}
	\end{equation}
	where in the last identity we have used~\eqref{eq:first-order}.
	Notice that if $d-\alpha-4\neq 0,$ then we have
		\begin{equation*}
		\frac{(4\lambda_m + (d+\alpha)(d-\alpha-4))^2}{4(4\lambda_m +(d-\alpha-4)^2)}=\frac{(d+\alpha)^2}{4},
		\qquad \text{if } \lambda_m=0.
	\end{equation*}
	This allows us to write the minimum in~\eqref{eq:final} in a more compact form, thus~\eqref{eq:final} can be rewritten as
	\begin{equation*}
	\int_{\R^d} \frac{|\mathcal{L}\psi(x)|^2}{|x|^{\alpha}}\, dx
	\geq 
	\min_{m\in \mathcal{I}} \frac{(4\lambda_m + (d+\alpha)(d-\alpha-4))^2}{4(4\lambda_m +(d-\alpha-4)^2)}
	\int_{\R^d} \frac{\mathcal{D}(\psi)(x)}{|x|^{\alpha+ 2}}\, dx.
	\end{equation*}
	On the other hand, if $d-\alpha-4=0$ the minimum in~\eqref{eq:final} becomes $\min \big((d-2)^2; \min_{\substack{m\in \mathcal{I}\\\lambda_m\neq 0}} \lambda_m\big ).$
	 This concludes the proof.
\end{proof}

\medskip
\noindent
We now pass to the proof of the Hardy-type inequality contained in Theorem~\ref{thm:Hardy-cdc}.
\begin{proof}[Proof of Theorem~\ref{thm:Hardy-cdc}]
From~\eqref{eq:first-order} (replacing $\alpha + 2$ with $\beta$) one has
\begin{equation*}
\int_{\R^d} \frac{\mathcal{D}(\psi)(x)}{|x|^{\beta}}\, dx=
	\sum_{m\in \mathcal{I}}
	\left \{\int_0^\infty |f_m'(r)|^2r^{d-\beta-1}\, dr 
	+ \lambda_m \int_0^\infty |f_m(r)|^2 r^{d-\beta-3} \,dr
	\right \}.
\end{equation*}
Using in the first integral of the right hand side of this identity the 1D-weighted Hardy inequality~\eqref{eq:1d-Hardy}, one gets
\begin{equation*}
	\begin{split}
	\int_{\R^d} \frac{\mathcal{D}(\psi)(x)}{|x|^{\beta}}\, dx
	&\geq 
	\min_{m\in \mathcal{I}} \Big \{ \lambda_m + \frac{(d-\beta -2)^2}{4} \Big\} \sum_{m\in \mathcal{I}} \int_0^\infty |f_m(r)|^2 r^{d-\beta-3}\, dr\\
	&=\min_{m\in \mathcal{I}} \Big \{ \lambda_m + \frac{(d-\beta -2)^2}{4} \Big\}\int_{\R^d} \frac{|\psi(x)|^2}{|x|^{\beta +2}}\,dx,
	\end{split}
\end{equation*}
where in the last identity we just used Parceval's identity as in the proof of Theorem~\ref{thm:main-general}. This concludes the proof.
\end{proof}
%
%\medskip
%\noindent
%We now pass to the proof of Theorem~\ref{thm:1D}.
%\begin{proof}[Proof of Theorem~\ref{thm:1D}]
%	Inequality~\eqref{eq:1dHardy-Rellich} is an immediate consequence of the estimate
%\begin{equation}\label{eq:0infty}
%\int_0^\infty \frac{|\psi''(x)|^2}{|x|^\alpha}\, dx\geq \frac{(\alpha+1)^2}{4}\int_0^\infty \frac{|\psi'(x)|^2}{|x|^{\alpha+2}}\, dx,
%\end{equation}
%for any $\psi\in C^\infty_0((0,\infty))$, which follows by~\eqref{eq:1d-Hardy} applied to $\psi'$.
%\end{proof}

\section{Proof of Theorem~\ref{thm:minimizing}. Optimality of $C(d,\alpha).$ }\label{sec:optimality}
Let $\psi_\epsilon$ the sequence defined in~\eqref{eq:minimizing}. 
We consider first the case $d-\alpha-4\neq 0$ or $d-\alpha-4=0$ and $C(d,\alpha)=\lambda_{m_0}.$
To shorten the notation we write  
\begin{equation*}
	\psi_\epsilon(x)= f_\epsilon(r)u_{m_0}(\omega),
\end{equation*}  
where $f_\epsilon(r)$ represents the radial part of $\psi_\epsilon,$ namely $f_\epsilon(r):=r^{-\frac{(d-\alpha-4)}{2}}g_\epsilon(r)$ and $g_\epsilon$ defined in~\eqref{eq:cutoff}.
As in Lemma~\ref{lemma:1d-reduction} one easily has
\begin{equation}\label{eq:lemma-minimizing}
	\begin{split}
	%\label{eq:second-order}
	&
	\begin{multlined}
	\int_{\R^d} \frac{|\mathcal{L}\psi_\epsilon(x)|^2}{|x|^\alpha}\, dx=
	\int_0^\infty |f_\epsilon''(r)|^2r^{d-\alpha-1}\, dr 
	+ [(d-1)(\alpha +1) + 2\lambda_{m_0}]\int_0^\infty |f_\epsilon'(r)|^2 r^{d-\alpha-3} \,dr \\
	+ \lambda_{m_0} [(\alpha + 2)(d-\alpha -4) + \lambda_{m_0}] \int_0^\infty |f_\epsilon(r)|^2 r^{d-\alpha-5} \,dr,
	\end{multlined}
	\\
	%\label{eq:first-order}
	&\int_{\R^d} \frac{\mathcal{D}(\psi_\epsilon)(x)}{|x|^{\alpha+2}}\, dx=
	\int_0^\infty |f_\epsilon'(r)|^2r^{d-\alpha-3}\, dr 
	+ \lambda_{m_0} \int_0^\infty |f_\epsilon(r)|^2 r^{d-\alpha-5} \,dr.	
	\end{split}
\end{equation} 
Differentiating $f_\epsilon$ with respect to $r$ gives 
\begin{equation}\label{eq:der1}
	f_\epsilon'(r)=
	- \frac{(d-\alpha-4)}{2} r^{-\frac{d-\alpha-2}{2}} g_\epsilon(r) 
	+ r^{-\frac{d-\alpha-4}{2}} g_\epsilon'(r),
\end{equation}
and 
\begin{equation}\label{eq:der2}
f_\epsilon''(r)=
\frac{(d-\alpha-4)}{2}\frac{(d-\alpha-2)}{2}r^{-\frac{d-\alpha}{2}} g_\epsilon(r)
 -2\frac{(d-\alpha-4)}{2}r^{-\frac{d-\alpha-2}{2}}g_\epsilon'(r)
+r^{-\frac{d-\alpha-4}{2}}g_\epsilon''(r).
\end{equation}
From the definition of $g_\epsilon$ in~\eqref{eq:cutoff}, the integrals in~\eqref{eq:lemma-minimizing} are supported over the interval $[\epsilon, 1/\epsilon].$ Now we consider separately the contributions of those integrals over the three sub-intervals $[\epsilon,2\epsilon],$ $[2\epsilon, 1/2\epsilon]$ and $[1/2\epsilon,1/\epsilon].$ We will see that the sole $\epsilon$-dependent contribution comes from the integration over $[2\epsilon,1/2\epsilon],$ whereas the integrals over $[\epsilon, 2\epsilon]$ and $[1/2\epsilon,1/\epsilon]$ are $\mathcal{O}(1)$ in the limit $\epsilon$ goes to $0.$

We start considering the integrals over $[\epsilon,2\epsilon].$ Using the explicit expressions for $f_\epsilon'$ and $f_\epsilon''$ in~\eqref{eq:der1} and~\eqref{eq:der2} respectively, one has
\begin{multline}\label{eq:int1}
	\int_\epsilon^{2\epsilon} |f_\epsilon''(r)|^2r^{d-\alpha-1}\, dr\\
	\leq
	3\Bigg\{
	\frac{(d-\alpha-4)^2}{4} \frac{(d-\alpha-2)^2}{4}\int_\epsilon^{2\epsilon} r^{-1}g_\epsilon^2(r)\,dr
	+4\frac{(d-\alpha-4)^2}{4}\int_\epsilon^{2\epsilon} r g_\epsilon'^{\,2}(r)\, dr
	+ \int_\epsilon^{2\epsilon} r^3g_\epsilon''^{\,2}(r)\, dr
	\Bigg\}.
\end{multline}
Now, using again the property of the function $g_\epsilon,$ it is easy to see that 
\begin{equation}\label{eq:no-dep}
	\begin{split}
	&\int_\epsilon^{2\epsilon} r^{-1}g_\epsilon^2(r)\, dr
	\leq \int_\epsilon^{2\epsilon} r^{-1}\, dr=\ln(2);\\
	&\int_\epsilon^{2\epsilon} r g_\epsilon'^{\,2}(r)\, dr
	\leq 2\epsilon \Big(\frac{c}{\epsilon}\Big)^2\epsilon=2c^2;\\
	&\int_\epsilon^{2\epsilon} r^3g_\epsilon''^{\,2}(r)\, dr
	\leq (2\epsilon)^3\Big(\frac{c}{\epsilon^2}\Big)^2\epsilon=8c^2.
\end{split}
\end{equation}
In particular, the three integrals above do not depend on $\epsilon,$ therefore from~\eqref{eq:int1} we have 
\begin{equation*}
	\int_\epsilon^{2\epsilon} |f_\epsilon''(r)|^2r^{d-\alpha-1}\, dr=\mathcal{O}(1).
\end{equation*}
Similarly, one has
\begin{equation*}
	\int_\epsilon^{2\epsilon} |f_\epsilon'(r)|^2r^{d-\alpha-3}\, dr
	\leq 
	2\Bigg\{
	\frac{(d-\alpha-4)^2}{4} \int_\epsilon^{2\epsilon} r^{-1} g_\epsilon^2(r)\, dr
	+\int_\epsilon^{2\epsilon}	r g_\epsilon'^{\,2}(r)\,dr
	\Bigg\},
\end{equation*} 
and from~\eqref{eq:no-dep}
\begin{equation*}
	\int_\epsilon^{2\epsilon} |f_\epsilon'(r)|^2r^{d-\alpha-3}\, dr=\mathcal{O}(1).
\end{equation*}
Analogously,
\begin{equation*}
	\int_\epsilon^{2\epsilon} |f_\epsilon(r)|^2r^{d-\alpha-5}\, dr
	=\int_\epsilon^{2\epsilon} r^{-1}g_\epsilon^2(r)\, dr=\mathcal{O}(1).
\end{equation*}
To sum up, one has
\begin{equation}\label{eq:O(1)}
	\begin{split}
		\int_\epsilon^{2\epsilon} |f_\epsilon''(r)|^2r^{d-\alpha-1}\, dr
	&=\mathcal{O}(1),\\
	\int_\epsilon^{2\epsilon} |f_\epsilon'(r)|^2r^{d-\alpha-3}\, dr
	&=\mathcal{O}(1),\\
	\int_\epsilon^{2\epsilon} |f_\epsilon(r)|^2r^{d-\alpha-5}\, dr
	&=\mathcal{O}(1).
	\end{split}
\end{equation}
When we are on $[2\epsilon, 1/2\epsilon],$ then $g_\epsilon=1$ and $f_\epsilon, f_\epsilon'$ and $f_\epsilon''$ assume the particularly simple form
\begin{equation*}
f_\epsilon(r)=r^{-\frac{d-\alpha-4}{2}};
\qquad
f_\epsilon'(r)=-\frac{(d-\alpha-4)}{2}r^{-\frac{d-\alpha-2}{2}};
\qquad
f_\epsilon''(r)=\frac{(d-\alpha-4)}{2}\frac{(d-\alpha-2)}{2}r^{-\frac{d-\alpha}{2}}.
\end{equation*}
Now, a direct computation gives
\begin{equation}\label{eq:non-trivial}
	\begin{split}
	\int_{2\epsilon}^{1/2\epsilon} |f_\epsilon''(r)|^2r^{d-\alpha-1}\, dr
	&=-\frac{(d-\alpha-4)^2}{4}\frac{(d-\alpha-2)^2}{4} \ln(4\epsilon^2),\\
	\int_{2\epsilon}^{1/2\epsilon} |f_\epsilon'(r)|^2r^{d-\alpha-3}\, dr
	&=-\frac{(d-\alpha-4)^2}{4}\ln(4\epsilon^2),\\
	\int_{2\epsilon}^{1/2\epsilon} |f_\epsilon(r)|^2r^{d-\alpha-5}\, dr
	&=-\ln(4\epsilon^2).
	\end{split}
\end{equation}
In the interval $[1/2\epsilon,1/\epsilon]$ analogous computations as the ones in $[\epsilon, 2\epsilon]$ give
\begin{equation}\label{eq:O(1)bis}
	\begin{split}
		\int_{1/2\epsilon}^{1/\epsilon} |f_\epsilon''(r)|^2r^{d-\alpha-1}\, dr
	&=\mathcal{O}(1),\\
	\int_{1/2\epsilon}^{1/\epsilon} |f_\epsilon'(r)|^2r^{d-\alpha-3}\, dr
	&=\mathcal{O}(1),\\
	\int_{1/2\epsilon}^{1/\epsilon} |f_\epsilon(r)|^2r^{d-\alpha-5}\, dr
	&=\mathcal{O}(1).
	\end{split}
\end{equation}
Using~\eqref{eq:O(1)},~\eqref{eq:non-trivial} and~\eqref{eq:O(1)bis} in~\eqref{eq:lemma-minimizing} we have
\begin{equation*}
	\begin{split}
\frac{\int_{\R^d} |\mathcal{L}\psi_\epsilon(x)|^2/|x|^\alpha\,dx}{\int_{\R^d} \mathcal{D}(\psi_\epsilon)(x)/|x|^{\alpha+2}\, dx}
&=
\frac{\int_{\R^d\cap \{2\epsilon\leq |x|\leq 1/2\epsilon\}} |\mathcal{L}\psi_\epsilon(x)|^2/|x|^\alpha\, dx + \mathcal{O}(1) }{\int_{\R^d\cap \{2\epsilon\leq |x|\leq 1/2\epsilon\}} |\mathcal{L}^{1/2}\psi_\epsilon(x)|^2/|x|^{\alpha+2}\, dx  + \mathcal{O}(1)}\\
%&=\frac{-\ln(2\epsilon)
%\Big\{
%\frac{(d-\alpha-4)^2}{4}\frac{(d-\alpha-2)^2}{4} 
%+ [(d-1)(\alpha+1) + 2\lambda_{m_0}]\frac{(d-\alpha-4)^2}{4}
%+\lambda_{m_0}[(\alpha+2)(d-\alpha-4) + \lambda_{m_0}] + \mathcal{O}(1/ln(2\epsilon))
%\Big\}}
%{-\ln(2\epsilon)
%\Big\{
%\frac{(d-\alpha-4)^2}{4} + \lambda_{m_0} + \mathcal{O}(1/\ln(R/2\epsilon))
%\Big\}}
&=\frac{
\big[ (d-\alpha-4)(d+\alpha) +4\lambda_{m_0}\big]^2
+\mathcal{O}(1/\ln(4\epsilon^2))
}
{
4[(d-\alpha-4)^2+ 4\lambda_{m_0}]
+\mathcal{O}(1/\ln(4\epsilon^2))
}\vspace{0.1cm}\\
&\searrow C(d,\alpha), \qquad \text{as }\epsilon\searrow 0.
	\end{split}
\end{equation*}
Now we consider the case $d-\alpha-4=0$ and $C(d,\alpha)=(d-2)^2.$ In this case $\psi_\epsilon(x):=h_\epsilon(r),$ \emph{i.e.}  $\psi_\epsilon(x)$ is radial. Since the spherical part is missing, performing analogous computations as in Lemma~\ref{lemma:1d-reduction} one gets
\begin{equation*}
	\begin{split}
	\int_{\R^d} \frac{|\mathcal{L}_\epsilon\psi(x)|^2}{|x|^\alpha}\, dx
	&=
	|\mathbb{S}^{d-1}|\bigg(\int_0^\infty |h_\epsilon''(r)|^2r^3\, dr 
	+ [(d-1)(d-3)]\int_0^\infty |h_\epsilon'(r)|^2r\, dr\bigg)\\
	\int_{\R^d} \frac{\mathcal{D}(\psi_\epsilon)(x)}{|x|^{\alpha+2}}\, dx&
	=|\mathbb{S}^{d-1}|\int_0^\infty |h_\epsilon'(r)|^2r\, dr.	
	\end{split}
\end{equation*}
From the definition~\eqref{eq:h_eps} of $h_\epsilon$ one has
\begin{equation*}
	h_\epsilon'(r)=r^{-1}g_\epsilon(r), 
	\qquad 
	h_\epsilon''(r)=-r^{-2}g_\epsilon(r) + r^{-1}g_\epsilon'(r).
\end{equation*}
As above we consider separately the integrals over the sub-interval $[\epsilon,2\epsilon], [2\epsilon,1/2\epsilon]$ and $[1/2\epsilon,1/\epsilon].$

In $[\epsilon,2\epsilon]$ one has
\begin{equation*}
\int_\epsilon^{2\epsilon} |h_\epsilon''|^2r^3\,dr\leq 2\Bigg(\int_\epsilon^{2\epsilon}r^{-1}g_\epsilon^2(r)\, dr + \int_\epsilon^{2\epsilon} r g_\epsilon'^{\,2}(r)\, dr\Bigg)
\end{equation*}
and 
\begin{equation*}
	\int_\epsilon^{2\epsilon} |h_\epsilon'(r)|^2r\,dr
	=\int_\epsilon^{2\epsilon} r^{-1}g_\epsilon^2(r)\, dr.	
\end{equation*}
Using~\eqref{eq:no-dep} one has
\begin{equation*}
\begin{split}
	\int_\epsilon^{2\epsilon} |h_\epsilon''|^2r^3\,dr&=\mathcal{O}(1),\\
	\int_\epsilon^{2\epsilon} |h_\epsilon'(r)|^2r\,dr&=\mathcal{O}(1).
\end{split}
\end{equation*}
We now consider the integrals over $[2\epsilon, 1/2\epsilon].$ Here $h_\epsilon'(r)=r^{-1}$ and $h_\epsilon''(r)=-r^{-2}.$ Thus
\begin{equation*}
	\int_{2\epsilon}^{1/2\epsilon} |h_\epsilon''|^2r^3\,dr
	=\int_{2\epsilon}^{1/2\epsilon} |h_\epsilon'|^2r\,dr
	=-\ln(4\epsilon^2).
\end{equation*}
Finally the integrals over $[1/2\epsilon,1/\epsilon]$ can be treated similarly to the ones over $[\epsilon, 2\epsilon].$ This gives
\begin{equation*}
	\begin{split}
	\int_{1/2\epsilon}^{1/\epsilon} |h_\epsilon''|^2r^3\,dr&=\mathcal{O}(1),\\
	\int_{1/2\epsilon}^{1/\epsilon} |h_\epsilon'(r)|^2r\,dr&=\mathcal{O}(1).
\end{split}
\end{equation*}
These facts together give
\begin{equation*}
	\begin{split}
\frac{\int_{\R^d} |\mathcal{L}\psi_\epsilon(x)|^2/|x|^\alpha\,dx}{\int_{\R^d} \mathcal{D}(\psi_\epsilon)(x)/|x|^{\alpha+2}\, dx}
&=
\frac{\int_{\R^d\cap \{2\epsilon\leq |x|\leq 1/2\epsilon\}} |\mathcal{L}\psi_\epsilon(x)|^2/|x|^\alpha\, dx + \mathcal{O}(1) }{\int_{\R^d\cap \{2\epsilon\leq |x|\leq 1/2\epsilon\}} \mathcal{D}(\psi_\epsilon)(x)/|x|^{\alpha+2}\, dx  + \mathcal{O}(1)}\\
&=\frac{
(d-2)^2
+\mathcal{O}(1/\ln(4\epsilon^2))
}
{
1
+\mathcal{O}(1/\ln(4\epsilon^2))
}\vspace{0.1cm}\\
&\searrow (d-2)^2, \qquad \text{as }\epsilon\searrow 0.
	\end{split}
\end{equation*}
This concludes the proof of the optimality of $C(d,\alpha).$

\medskip
\noindent
It remains to show that the constant $C(d,\alpha)$ is not attained. This fact is a consequence of the non-attainability of the best constant in the 1D-Hardy inequality~\eqref{eq:1d-Hardy}. Indeed, going back through the proof of Theorem~\ref{thm:main-general}, one realizes that for $C(d,\alpha)$ to be attained, it is necessary to have equality in the estimates where we applied~\eqref{eq:1d-Hardy}. More precisely, we want to have equality in
\begin{equation}\label{eq:first-att} 
	\int_0^\infty |f_m''(r)|^2 r^{d-\alpha-1}\, dr
	\geq 
	\Big(
	\frac{d-\alpha-2}{2}
	\Big)^2
	\int_0^\infty |f_m'(r)|^2 r^{d-\alpha-3}\, dr,
\end{equation}  
or, equivalently, in
\begin{equation}\label{eq:last-att} 
	\int_0^\infty |f_m'(r)|^2 r^{d-\alpha-3}\, dr
	\geq 
	\Big(
	\frac{d-\alpha-4}{2}
	\Big)^2
	\int_0^\infty |f_m(r)|^2 r^{d-\alpha-5}\, dr.
\end{equation}  
Notice that~\eqref{eq:last-att} is also a consequence of the identity
\begin{equation*}
	\int_0^\infty |f_m'(r)|^2 r^{d-\alpha-3}\, dr
	-
	\Big(
	\frac{d-\alpha-4}{2}
	\Big)^2
	\int_0^\infty |f_m(r)|^2 r^{d-\alpha-5}\, dr
	=\int_0^\infty \Big|\big(r^\frac{d-\alpha-4}{2}f_m(r)\big)'\Big|^2 r\, dr.
\end{equation*} 
In view of the last identity, equality in~\eqref{eq:last-att} is achieved if 
\begin{equation*}
	\big(r^\frac{d-\alpha-4}{2}f_m(r)\big)'=0,
\end{equation*}
which leads to the family of solutions
\begin{equation*}
	f_m(r)=a_m r^{-\frac{d-\alpha-4}{2}} + b_m,
\end{equation*}
for some real constants $a_m, b_m.$ Thus, the fundamental system of solutions is given by $\{r^{-\frac{d-\alpha-4}{2}},1\}.$ Notice that $f_m(r)=1$ is not possible since constant functions are not admissible for inequality~\eqref{eq:last-att}. Moreover, $f_m(r)=r^{-\frac{d-\alpha-4}{2}}$ is not admissible because none of the terms in~\eqref{eq:first-att} are integrable. Thus, we conclude that $C(d,\alpha)$ is not attained.
\qed

\section{Proof of the particular cases: Theorem~\ref{thm:general-electric}, Theorem~\ref{thm:HB-anydimension} and Theorem~\ref{thm:monopole}}\label{sec:consequences}
In order to prove Theorem~\ref{thm:general-electric}, Theorem~\ref{thm:HB-anydimension} and Theorem~\ref{thm:monopole} one simply has to show that the corresponding operators can be recast into the form of the general operator $\mathcal{L}$ defined in~\eqref{eq:operator}. 
\begin{proof}[Proof of Theorem~\ref{thm:general-electric}]
Consider the operator $-\Delta_{a(\theta)}:=-\Delta + \tfrac{a(\theta)}{|x|^2},$ since the function $a=a(\theta)$ depends only on the spherical variable $\theta,$ it is easy to see that $-\Delta_{a(\theta)}$ can be written more conveniently as
\begin{equation*}
	-\Delta_{a(\theta)}=L_r + \frac{1}{r^2} (-\Delta_{\mathbb{S}^{d-1}} + a(\theta)),
	\qquad L_r=-\frac{\partial^2}{\partial r^2}- \frac{d-1}{r}\frac{\partial}{\partial r},
\end{equation*}
thus the operator $\Lambda_\omega$ in~\eqref{eq:operator} is represented by the non-negative, self-adjoint operator $-\Delta_{\mathbb{S}^{d-1}} + a(\theta)$  in $L^2(\mathbb{S}^{d-1};d\theta).$ This operator has been largely studied (see \emph{e.g.}~\cite{FMT2007,FFT2011, FFFP2}). In particular in~\cite[Lemma 2.1]{FMT2007} it has been proved that $\Lambda_\omega=-\Delta_{\mathbb{S}^{d-1}} + a(\theta)$ on $\mathbb{S}^{d-1}$ admits a divergent sequence of eigenvalues $\mu_k,$ $k\in \N_0,$ with finite multiplicity, the first of which satisfies $\mu_0\geq \ess \inf_{\mathbb{S}^{d-1}} a.$ Therefore, the hypotheses of Theorem~\ref{thm:main-general} are satisfied. Thus Theorem~\ref{thm:general-electric} follows from identity~\eqref{eq:cdc-corr} as soon as one checks that the Carré du Champ in this case is given by $\Gamma(\psi)= |\nabla \psi|^2 + \frac{a}{2}|\psi|^2/|x|^2$ and eventually noticing that $\Lambda_\omega |x|^\beta=a|x|^\beta.$ 
\end{proof}

\begin{proof}[Proof of Theorem~\ref{thm:HB-anydimension}]
As in the previous case we show that the Aharonov-Bohm magnetic Laplacian $-\Delta_{A}$ can be written in the form~\eqref{eq:operator} in any dimension $d\geq 2.$ 

For $d\geq 2,$ we take the transformation from Cartesian to spherical coordinates, namely $x=(x_1,x_2, \dots, x_d)\in \R^d$ to $(r, \theta_1, \dots, \theta_{d-1})\in (0,\infty)\times \mathbb{S}^{d-1},$ where $\mathbb{S}^{d-1}$ is the $d-1$-dimensional sphere with respect to the Hausdorff measure in $\R^d,$ given by
\begin{equation*}
	\begin{split}
		&x_1=r\cos\theta_1,\\
		&x_j=r\cos \theta_j \prod_{k=1}^{j-1}\sin \theta_k, \qquad j\in \{2,3\dots, d-1\},\\
		&x_d=r\prod_{k=1}^{d-1}\sin \theta_k.
	\end{split}
\end{equation*}
The corresponding orthogonal unit vectors are given by
\begin{equation*}
	\begin{split}
		&e_r:=(\cos \theta_1, \cos \theta_2\sin \theta_1, \dots, \cos \theta_{d-1}\prod_{k=1}^{d-2}\sin \theta_k, \prod_{k=1}^{d-1}\sin \theta_k),\\
		&\begin{multlined}
			e_{\theta_j}:=(\underbrace{0, \dots, 0}_{j-1}, -\sin \theta_j, \cos \theta_{j+1} \cos \theta_j, \cos\theta_{j+2}\cos \theta_j \sin \theta_{j+1},
		 \dots,\\ 
		 \cos\theta_{d-1}\cos \theta_j \prod_{k=1, k\neq j}^{d-2}\sin \theta_k, \cos \theta_j \prod_{k=1, k\neq j}^{d-1}\sin \theta_k),
	\qquad \qquad j\in \{1, \dots, d-2\},
	\end{multlined}	
	\\
		&e_{\theta_{d-1}}:=(\underbrace{0, \dots, 0}_{d-2},-\sin \theta_{d-1}, \cos \theta_{d-1}).
	\end{split}
\end{equation*}
%It can be shown that the gradient in spherical coordinates is
%\begin{equation*}
%	\nabla=e_r \frac{\partial}{\partial r} + \frac{1}{r}e_{\theta_1} \frac{\partial}{\partial \theta_1} 
%	+\sum_{j=2}^{d-1} \frac{1}{r \prod_{k=1}^{j-1}\sin \theta_k} e_{\theta_j} \frac{\partial}{\partial \theta_j}. 
%\end{equation*}
%The Aharonov-Bohm type vector potential $A$ is customarily defined to be the vector field
%\begin{equation}\label{eq:A-polar-coordinates}
%A:=
%\begin{cases}
%\frac{1}{r}\Psi(\theta_1)e_{\theta_1}, \qquad &\text{if } d=2,\\
%\frac{1}{r\prod_{k=1}^{d-2} \sin \theta_k} \Psi(\theta_{d-1})e_{\theta_{d-1}}, \qquad &\text{if } d\geq3,  
%\end{cases}
%\end{equation}  
%$\Psi\in L^\infty(\mathbb{S}^1),$ defined on $\R^d\setminus \mathcal{G}_d$ where 
%\begin{equation*}
%	\mathcal{G}_d:=
%	\begin{cases}
%	\{0\}, \qquad &\text{if }d=2,\\
%	\{x\in \R^d, x=(r, \theta_1,\dots, \theta_{d-1})\colon r\prod_{k=1}^{d-2}\sin \theta_k=0 \}, \qquad & \text{if }d\geq 3.
%	\end{cases}
%\end{equation*}
Without loss of generality we can assume that the function $\Psi=\Psi(\theta_{d-1})$ is constant, indeed $A$ as defined in~\eqref{eq:A-polar-coordinates} is gauge equivalent to the vector potential $\widetilde{A}$ defined as 

Using spherical coordinates, the Aharonov-Bohm vector potential $A$ defined in~\eqref{eq:AB-gen} can be rewritten as
\begin{equation}\label{eq:A-polar-coordinates}
	A:=
\begin{cases}
\frac{1}{r}\widetilde{\Psi} e_{\theta_1}, \qquad &\text{if } d=2,\\
\frac{1}{r\prod_{k=1}^{d-2} \sin \theta_k} \widetilde{\Psi} e_{\theta_{d-1}}, \qquad &\text{if } d\geq3,  
\end{cases}
\qquad \widetilde{\Psi}:=\frac{1}{2\pi}\int_0^{2\pi}\Psi(\theta)\, d\theta,
\end{equation}
(see~\cite[Section 5.4.2]{BEL} for more details). 
Recalling the following expression for the gradient in spherical coordinates
\begin{equation*}
	\nabla=e_r \frac{\partial}{\partial r} + \frac{1}{r}e_{\theta_1} \frac{\partial}{\partial \theta_1} 
	+\sum_{j=2}^{d-1} \frac{1}{r \prod_{k=1}^{j-1}\sin \theta_k} e_{\theta_j} \frac{\partial}{\partial \theta_j},
\end{equation*}
one checks easily that the magnetic gradient $\nabla_A:=\nabla -iA$ associated to the Aharonov-Bohm magnetic vector potential~\eqref{eq:A-polar-coordinates} can be written as
\begin{equation*}
	\nabla_A=e_r\frac{\partial}{\partial_r} + \frac{1}{r}\nabla_{d,\theta},
	\qquad \nabla_{d,\theta}=e_{\theta_1}\frac{\partial}{\partial\theta_1} + \sum_{j=2}^{d-2} \frac{1}{\prod_{k=1}^{j-1}\sin \theta_k} e_{\theta_j} \frac{\partial}{\partial \theta_j}
	+ \frac{1}{\prod_{k=1}^{d-2}\sin \theta_k}e_{\theta_{d-1}}  \Big(\frac{\partial}{\partial \theta_{d-1}}-i\widetilde{\Psi}\Big). 
\end{equation*}
The corresponding magnetic Laplacian $-\Delta_A:=-\nabla_{\!A}^2$ has the form
\begin{equation*}
	-\Delta_A:=L_r
	+ \frac{1}{r^2}\Lambda_{d,\theta},
\end{equation*}
where
\begin{equation*}
	L_r=-\frac{\partial^2}{\partial r^2}-\frac{d-1}{r} \frac{\partial}{\partial r}
	\qquad
	\text{and}
	\qquad
	\Lambda_{d,\theta}=
	 - \sum_{j=1}^{d-2} \frac{1}{q_j}\Big[(d-j-1)\cot \theta_j \frac{\partial}{\partial \theta_j} + \frac{\partial^2}{\partial \theta_j^2}\Big] + \frac{1}{q_{d-1}}\left(i \frac{\partial}{\partial \theta_{d-1}} + \Psi(\theta_{d-1}) \right)^2,
\end{equation*}
with
\begin{equation*}
	q_j:=
	\begin{system}
	&1, \qquad & \text{if } j=1,\\
	&\prod_{k=1}^{j-1}\sin^2 \theta_k, \qquad & \text{if } j\geq 2. 
	\end{system}
\end{equation*}
An easy computation also shows that the generalisation of the Laplace-Beltrami operator $\Lambda_{d,\theta}$ can be obtained through the angular part of the magnetic gradient as follows $\Lambda_{d,\theta}=-\nabla_{d,\theta}\cdot \nabla_{d,\theta}.$ Moreover, the following identity can be obtained by integration by parts
\begin{equation}\label{eq:grad-LB}
\int_{\mathbb{S}^{d-1}} \psi \Lambda_{d,\theta}\psi\, d\omega=\int_{\mathbb{S}^{d-1}} |\nabla_{d, \theta}\psi|^2\, d\omega.
\end{equation}
Clearly, the operator $\Lambda_{d,\theta}$ plays the role of $\Lambda_\omega$ in~\eqref{eq:operator}. Moreover, in~\cite[Theorem 3.2]{Thomas2007} (see also~\cite{EL2005}) it is proved that the non-negative, self-adjoint magnetic Laplace-Beltrami operator $\Lambda_\omega:=\Lambda_{d,\theta}$ has spectrum consisting of eigenvalues 
\begin{equation*}
	\lambda_m=(m+\widetilde \Psi)(m+\widetilde \Psi + d-2),
\end{equation*}
where $m\in \Z':=\{m\in \Z\colon m\leq 2-d-\widetilde \Psi \, \text{or } m\geq -\widetilde \Psi\}.$ 
 Using now Theorem~\ref{thm:main-general}, one gets Theorem~\ref{thm:HB-anydimension} as soon as it is shown that
 \begin{equation}\label{eq:final-cdc}
 	\int_{\R^d} \frac{\mathcal{D}(\psi)(x)}{|x|^{\alpha+2}}\, dx=\int_{\R^d} \frac{|\nabla_A \psi|^2}{|x|^{\alpha+2}}\, dx.
 \end{equation}   
 By definition 
 \begin{equation*}
 	\int_{\R^d} \frac{\mathcal{D}(\psi)(x)}{|x|^{\alpha+2}}\, dx=
		\int_{\R^d} \frac{|\partial_r \psi(x)|^2}{|x|^{\alpha+ 2}}\, dx + \int_{\R^d} \frac{|\Lambda_{d,\theta}^{1/2} \psi(x)|^2}{|x|^{\alpha + 4}}\, dx,
 \end{equation*}
 where $\Lambda_{d,\theta}^{1/2}$ denotes the square root of the non-negative, self-adjoint magnetic Laplace-Beltrami $\Lambda_{d,\theta}.$  
From identity~\eqref{eq:grad-LB} the following chain of identities holds
\begin{equation*}
	\int_{\mathbb{S}^{d-1}} |\Lambda_{d,\theta}^{1/2}\psi|^2\, d\omega
	=\int_{\mathbb{S}^{d-1}} \overline{\Lambda_{d,\theta}^{1/2}\psi}\Lambda_{d,\theta}^{1/2}\psi\, d\omega
	=\int_{\mathbb{S}^{d-1}} \overline{\psi} \Lambda_{d,\theta}\psi \, d\omega
	=\int_{\mathbb{S}^{d-1}} |\nabla_{d,\theta}\psi|^2\, d\omega,
\end{equation*} 
thus one has 
\begin{equation*}
 	\int_{\R^d} \frac{\mathcal{D}(\psi)(x)}{|x|^{\alpha+2}}\, dx=
		\int_{\R^d} \frac{|\partial_r \psi(x)|^2}{|x|^{\alpha+ 2}}\, dx + \int_{\R^d} \frac{|\nabla_{d,\theta} \psi(x)|^2}{|x|^{\alpha + 4}}\, dx.
 \end{equation*}
Using that
\begin{equation*}
	|\nabla_A \psi|^2=|\partial_r \psi|^2 + \frac{1}{r^2}|\nabla_{d,\theta}\psi|^2,
\end{equation*}
then we have~\eqref{eq:final-cdc} and, in turn, Theorem~\ref{thm:HB-anydimension} is proved.
\end{proof}

\begin{proof}[Proof of Theorem~\ref{thm:monopole}]
The Hamiltonian of a monopole of degree $g$ in $\R^3$ has been intensively studied in
~\cite{CT2010}). In particular it can be shown that the magnetic Laplacian $-\Delta_A$ associated to the vector potential $A$ defined in~\eqref{eq:A-monopole} can be written as
\begin{equation*}
	-\Delta_A=-\frac{\partial^2}{\partial r^2} - \frac{2}{r} \frac{\partial}{\partial r} + \frac{1}{r^2} K_g,
\end{equation*}
where $K_g$ is the angular magnetic Schrödinger operator (see~\cite{CT2010}) and plays the role of $\Lambda_\omega$ in~\eqref{eq:operator}. One can also prove~\cite[Theorem 5.13]{CT2010} that spectrum of $K_g$ is discrete, more precisely it is the sequence $\lambda_k=\frac{1}{4} k(k+2)-g^2,$ $k=2(|g|+l), l\in \N_0.$

Thus, the hypotheses of Theorem~\ref{thm:main-general} are satisfied and therefore Theorem~\ref{thm:monopole} follows.
\end{proof}

%-------------Bibliography---------------%
\providecommand{\bysame}{\leavevmode\hbox to3em{\hrulefill}\thinspace}

%---------------End-------------------%

\end{document}